\theoremstyle{plain}
\newtheorem{thm}{Theorem}[section]
\newtheorem{lem}[thm]{Lemma}
\newtheorem{cor}[thm]{Corollary}
\newtheorem{prop}[thm]{Proposition}
\theoremstyle{definition}
\newtheorem{rem}[thm]{Remark}
\newtheorem{que}[thm]{Question}
\newcommand{\C}{\mathcal{C}}
\DeclareMathOperator{\GL}{GL}
\DeclareMathOperator{\Cent}{Cent}
\begin{document} 

\title[On groups with a given central factor]{On groups with a given central factor} 

\author[S. J. Baishya  ]{Sekhar Jyoti Baishya} 
\address{S. J. Baishya, Department of Mathematics, Pandit Deendayal Upadhyaya Adarsha Mahavidyalaya, Behali, Biswanath-784184, Assam, India.}

\email{sekharnehu@yahoo.com}

\begin{abstract}
We have classified, upto isoclinism, certain groups with a given central factor. As an application, we classify, upto isoclinism, groups having at the most nine element centralizers. Among other results of independent interest, we have classified, upto isoclinism, groups having a central factor of order $p^3$, $p$ a prime. All these improves some previous results. 
\end{abstract}

\subjclass[2020]{20D60, 20D99}
\keywords{Finite group, Centralizer, capable group}
\maketitle

\section{Introduction} \label{S:intro}

A group $K$ is said to be capable if  $K \cong \frac{G}{Z(G)}$ for some group $G$. The study of capable groups was initiated by R. Baer \cite{baer}, who determined all capable groups which are direct sums of cyclic groups. Consequently, all capable finite abelian groups are characterised by R. Baer's result. For finite $p$-groups ($p$ a prime)  capability is closely related to their classification. The authors of \cite{capable} characterised capable extraspecial groups (only $D_8$ and the extraspecial groups of order $p^3$ and exponent $p$ are capable); they also studied the metacyclic capable groups.

Our approach in this paper is towards the opposite direction. To be specific we would like to explore the following question:

\begin{que} \label{q6}
Given a capable group $K$, characterize $G$ such that $K \cong \frac{G}{Z(G)}$.
\end{que} 

It is worth mentioning that the study of groups with a given central factor have been considered by many researchers. See for example \cite{machale, amiriF, amiriS, baishya2, heffernan} etc. for advances on this and related concepts.

Here we have considered the above question and classified, upto isoclinism, certain groups with a given central factor. As an application, we classify, upto isoclinism, groups having at the most nine element centralizers. It may be mentioned here that M. Chipola \cite{cipolla} in 1909 proved that a non-abelian group contains minimum four element centralizers; and it has four element centralizers if and only if its central factor is of order four. Among other results of independent interest, we have classified, upto isoclinism, groups having a central factor of order $p^3$, $p$ a prime. All these improves some previous results.

In this paper  $Z(G), G', \Cent(G)$, $nacent(G)$ and $\omega (G)$ denotes the  center, commutator subgroup, set of element centralizers, set of non-abelian element centralizers and the size of a maximal set of pairwise non-commuting elements of $G$ respectively.  $C(x)$ denotes the centralizer of $x \in G$ and $C_n$ denotes the cyclic group of order $n$.

\section{Preliminary  results}

We begin with the notion of isoclinism between two groups introduced by P. Hall \cite{hall} in 1940. Two groups $G$ and $H$ are said to be isoclinic if there are two isomorphisms $\varphi : G/Z(G) \longrightarrow H/Z(H)$ and $\phi : G' \longrightarrow H'$ such that if 
\[
\varphi(g_1Z(G))=h_1Z(H) \;\; \text{and} \;\; \varphi(g_2Z(G))=h_2Z(H)
\]
with $g_1, g_2 \in G, h_1, h_2 \in H$, then 
\[
\phi([g_1, g_2])=[h_1, h_2].
\] 

Isoclinism is an equivalence relation weaker than isomorphism having many family invariants. See \cite{baishyaS} for some invariants concerning element centralizers.

A group $G$ is said to be $ \C_n$-group if $\mid \Cent(G)\mid=n$ and  primitive $ \C_n$-group if $\mid \Cent(G)\mid= \mid \Cent(\frac{G}{Z(G)})\mid=n$. 

\begin{rem}\label{remark}
In view of \cite[Theorem 3.1]{non}, we have $G$ is a $C_n$-group if and only if $Z(G)$ has finite index in $G$. Moreover, by a classical result of Schur \cite[Proposition 2.4.4]{schur}, we know that if $Z(G)$ has finite index in $G$, then $G'$ is finite. In the present scenario, using \cite[Hall, p. 134]{hall}, every $C_n$ group $G$ is isoclinic with a finite group  $M$ such that $Z(M) \subseteq M'$. Furthermore, we have $\mid G' \cap Z(G) \mid=\mid M' \cap Z(M) \mid$.
\end{rem}

 In \cite{en09} it is proved that if $G$ is a finite group with  $G' \cap Z(G)=\lbrace 1 \rbrace$, then it is primitive $\C_n$-group. The following proposition  generalizes this result. It also shows that the converse of this statement need not be true.

\begin{lem}\label{sss}
For any $ \C_n$-group $G$ and $N\unlhd G$, we have $\mid \Cent(\frac{G}{N})\mid= \mid \Cent(\frac{G}{G' \cap N})\mid$. 
\end{lem}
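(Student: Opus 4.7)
The plan is to establish a direct bijection between the two centralizer sets by lifting centralizers up to subgroups of $G$. The key observation, which is really the only substantive idea needed, is that for any $g, x \in G$, the commutator $[g,x]$ automatically lies in $G'$; hence the conditions $[g,x] \in N$ and $[g,x] \in G' \cap N$ are equivalent.

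First I would fix $x \in G$ and identify the centralizer of $xN$ in $G/N$. The relation $gN \cdot xN = xN \cdot gN$ is equivalent to $[g,x] \in N$, so setting
\[
C_x := \{ g \in G : [g,x] \in N \},
\]
one has $C_{G/N}(xN) = C_x / N$, and $C_x$ is a subgroup of $G$ containing both $N$ and $G' \cap N$. Next, performing the same computation in $G/(G' \cap N)$, the centralizer of $x(G' \cap N)$ is $\{g \in G : [g,x] \in G' \cap N\}/(G' \cap N)$; by the observation above this is again $C_x/(G' \cap N)$. Thus the same subgroup $C_x$ of $G$ serves as the lift of both centralizers.

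With this in hand I would define
\[
\Phi : \mathrm{Cent}(G/N) \longrightarrow \mathrm{Cent}(G/(G' \cap N)), \qquad C_x/N \longmapsto C_x/(G' \cap N).
\]
Well-definedness and injectivity follow at once from the chain of equivalences
\[
C_x/N = C_y/N \iff C_x = C_y \iff C_x/(G' \cap N) = C_y/(G' \cap N),
\]
valid because $N$ and $G' \cap N$ are both contained in $C_x \cap C_y$. Surjectivity is immediate since every element of $\mathrm{Cent}(G/(G' \cap N))$ is the centralizer of some coset $x(G' \cap N)$ and therefore equals $C_x/(G' \cap N) = \Phi(C_x/N)$.

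There is no real obstacle here beyond making sure the lifting is set up correctly; once the observation $[g,x] \in G'$ is exploited, the whole statement collapses to a tautology. I would only briefly note that the sets in question are finite, which follows from Remark~2.1: since $G$ is a $\mathcal{C}_n$-group, $Z(G)$ has finite index in $G$, hence $Z(G/N) \supseteq Z(G)N/N$ has finite index in $G/N$, and similarly for $G/(G' \cap N)$, so both quotients are themselves $\mathcal{C}$-groups and $|\mathrm{Cent}(\cdot)|$ makes sense.
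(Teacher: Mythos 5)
Your argument is correct, and it is a genuinely different (and more self-contained) route than the paper's. The paper disposes of this lemma in one line by citing Hall's isoclinism machinery: $G/N$ and $G/(G'\cap N)$ are isoclinic (Hall, p.~134), and $\mid\Cent(\cdot)\mid$ is an isoclinism invariant (\cite[Lemma 3.2]{non}). You instead build the bijection by hand: since every commutator $[g,x]$ lies in $G'$, the preimage in $G$ of $C_{G/N}(xN)$ and the preimage of $C_{G/(G'\cap N)}(x(G'\cap N))$ are literally the same subgroup $C_x=\{g\in G:[g,x]\in N\}$, and passing to the two quotients sets up a well-defined, injective, surjective correspondence between the two sets of centralizers. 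This is exactly the computation that underlies the isoclinism proof, so nothing is lost; what you gain is that the argument is elementary, needs no external references, and in fact never uses the $\C_n$ hypothesis --- a bijection gives equality of cardinalities even when the centralizer sets are infinite, so your closing paragraph about finiteness is harmless but dispensable. The only points worth making explicit in a final write-up are that $C_x$ is a subgroup because it is the full preimage of $C_{G/N}(xN)$ under the quotient map (hence contains $N\supseteq G'\cap N$), and that $G'\cap N\unlhd G$ so the second quotient is legitimate; both are immediate.
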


\begin{proof}
It follows combining \cite[p. 134]{hall} with \cite[Lemma 3.2]{non}. 
\end{proof}

\begin{prop}\label{b1136}
For any $ \C_n$-group $G$ and $N\unlhd G$ with  $G' \cap N=\lbrace 1 \rbrace$, we have  $\mid \Cent(G)\mid= \mid \Cent(\frac{G}{N})\mid$. The converse need not be true.  
\end{prop}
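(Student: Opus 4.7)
The plan is to derive the forward direction immediately from Lemma~\ref{sss}, then construct an explicit counterexample showing the converse can fail. For the forward direction, observe that the hypothesis $G' \cap N = \{1\}$ makes the quotient $G/(G' \cap N)$ equal to $G/\{1\} \cong G$; applying Lemma~\ref{sss} then gives
\[
|\Cent(G/N)| = |\Cent(G/(G' \cap N))| = |\Cent(G)|,
\]
so this half is essentially a one-line deduction.

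For the converse, I would take $G = D_8 \times D_8$ and $N = \langle (z,z) \rangle$, where $z$ is the unique non-identity element of $Z(D_8) \cong C_2$. Since $Z(G) = G' = Z(D_8) \times Z(D_8)$, the diagonal subgroup $N$ sits inside $G' \cap Z(G)$ and satisfies $G' \cap N = N \neq \{1\}$. The equality $|\Cent(G)| = |\Cent(D_8)|^2 = 16$ is immediate from the fact that centralizers in a direct product factor as direct products of centralizers in the factors. On the other hand, $G/N$ is the central product $D_8 \ast D_8$, the extraspecial $2$-group of order $32$ of $+$ type. The commutator pairing equips $(G/N)/Z(G/N)$, an elementary abelian $2$-group of rank $4$, with a non-degenerate symplectic form valued in $Z(G/N) \cong C_2$; the centralizer of a non-central element $x \in G/N$ is then the preimage of the orthogonal complement of $\langle xZ(G/N) \rangle$. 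Consequently, distinct centralizers of non-central elements are in bijection with the $15$ one-dimensional subspaces of this rank-$4$ space, giving $|\Cent(G/N)| = 15 + 1 = 16$. Hence $|\Cent(G)| = |\Cent(G/N)|$ while $G' \cap N$ is non-trivial.

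The main obstacle is not a deduction but a discovery: one needs the equality $|\Cent(G)| = |\Cent(G/N)|$ to persist across a quotient by a non-trivial central commutator, which forces the two groups $G$ and $G/N$ to be non-isoclinic (they have central factors of the same order but commutator subgroups of different orders) yet still share the same centralizer count. Here $D_8 \times D_8$ (with $|G'| = 4$) and $D_8 \ast D_8$ (with $|G'| = 2$) are not isoclinic, yet both give $|\Cent| = 16$. Once this pair is identified, each of the two centralizer counts reduces to a standard computation.
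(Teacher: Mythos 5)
Your forward direction is exactly the paper's: specialize Lemma~\ref{sss} using $G/(G'\cap N)\cong G$. Where you diverge is the counterexample to the converse. The paper simply cites $G=\GL(2,3)$, which is a primitive $\C_{14}$-group (so $\mid\Cent(G)\mid=\mid\Cent(G/Z(G))\mid$) with $\mid G'\cap Z(G)\mid=2$; this is short but leans on prior knowledge of the centralizer structure of $\GL(2,3)$. You instead build $G=D_8\times D_8$ with $N$ the diagonal central $C_2$, so that $G/N$ is the extraspecial group of order $32$, and verify $\mid\Cent(G)\mid=\mid\Cent(G/N)\mid=16$ by two elementary counts: centralizers in a direct product factor as products of centralizers (giving $4\times 4=16$), and in an extraspecial $2$-group of order $2^{2n+1}$ the proper centralizers correspond to the $2^{2n}-1$ lines of the symplectic space $E/Z(E)$ (giving $15+1=16$). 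Both counts are correct, $N\unlhd G$, and $G'\cap N=N\neq\{1\}$ since $G'=Z(D_8)\times Z(D_8)\supseteq N$, so your example is valid. What your route buys is self-containedness and the extra observation that the equality of centralizer counts can survive a quotient that changes the isoclinism class; what the paper's example buys is brevity and the stronger statement that the failure can already occur with $N=Z(G)$, tying directly into the paper's theme of primitive $\C_n$-groups. Your parenthetical that non-isoclinism is ``forced'' is not needed for the proof and should be read only as a remark.
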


\begin{proof}
It follows from Lemma \ref{sss}. However, $G=GL(2, 3)$ is a primitive $\C_{14}$-group with $ \mid G' \cap Z(G) \mid = 2$.
\end{proof}

A group $G$ is called a CA-group if $C(x)$ is abelian for all $x \in G \setminus Z(G)$.
Note that $GL(2, 3)$ is a primitive $\C_{14}$-CA-group whose central factor is not a CA-group.

\begin{prop}\label{b1}
Let $G$ be any primitive $\C_{n}$-CA-group such that $ \frac{G}{Z(G)}$ is not a CA-group. Then $G' \cap Z(G) \neq \lbrace 1 \rbrace$.
\end{prop}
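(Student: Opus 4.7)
The plan is to argue by contraposition: assume $G'\cap Z(G)=\lbrace 1\rbrace$ and derive that $G/Z(G)$ must itself be a CA-group, contradicting the hypothesis.

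Fix any $\bar x\in G/Z(G)$ with $\bar x\notin Z(G/Z(G))$. If $Z_2(G)$ denotes the second center of $G$ (so $Z_2(G)/Z(G)=Z(G/Z(G))$), this means $x\notin Z_2(G)$; in particular $x\notin Z(G)$. Since $G$ is a CA-group, $C_G(x)$ is abelian, and this will be the fuel for the whole argument.

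The key step is to identify the centralizer of $\bar x$ in $G/Z(G)$ with $C_G(x)/Z(G)$. The inclusion $C_G(x)/Z(G)\subseteq C_{G/Z(G)}(\bar x)$ is automatic. For the reverse, take $\bar y\in C_{G/Z(G)}(\bar x)$; then $[x,y]\in Z(G)$. But $[x,y]\in G'$, so the hypothesis $G'\cap Z(G)=\lbrace 1\rbrace$ forces $[x,y]=1$, i.e.\ $y\in C_G(x)$. Hence
\[
C_{G/Z(G)}(\bar x)=C_G(x)/Z(G)
\]
is a quotient of an abelian group, hence abelian. Since $\bar x$ was an arbitrary element of $G/Z(G)$ outside its center, $G/Z(G)$ is a CA-group, contradicting the hypothesis; so $G'\cap Z(G)\neq\lbrace 1\rbrace$, as desired.

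I do not anticipate any real technical obstacle: the argument is essentially the observation that centralizers descend cleanly to $G/Z(G)$ exactly when $G'\cap Z(G)$ is trivial. The subtle point worth highlighting is the converse phenomenon — when $G'\cap Z(G)\neq\lbrace 1\rbrace$, there can be elements $y$ with $[x,y]\in Z(G)\setminus\lbrace 1\rbrace$, producing centralizers in $G/Z(G)$ strictly larger than $C_G(x)/Z(G)$, which is precisely the mechanism by which the CA property can fail to transfer to the central quotient, as illustrated by $GL(2,3)$ immediately before the statement. No use is made of the primitivity of $G$ as a $\C_n$-group; that hypothesis sets the context in which the result is being deployed but is not logically needed for this implication.
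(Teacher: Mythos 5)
Your proof is correct. Assuming $G'\cap Z(G)=\{1\}$, the identity $C_{G/Z(G)}(\bar x)=C_G(x)/Z(G)$ for $x\notin Z_2(G)$ is exactly right: the reverse inclusion uses $[x,y]\in G'\cap Z(G)=\{1\}$, and then $C_{G/Z(G)}(\bar x)$ is a quotient of the abelian group $C_G(x)$, so $\frac{G}{Z(G)}$ is a CA-group, contradicting the hypothesis. You are also right that neither the primitivity of $G$ as a $\C_n$-group nor finiteness of $\frac{G}{Z(G)}$ enters the argument.

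The paper reaches the same conclusion by a different, more machinery-laden route: it invokes Hall's result that $G'\cap Z(G)=\{1\}$ makes $G$ isoclinic with $\frac{G}{Z(G)}$, together with a proposition from an earlier paper asserting that the CA property is an isoclinism invariant, so that $\frac{G}{Z(G)}$ would be CA --- the same contradiction. Your version unpacks what that isoclinism argument is really doing at the level of individual centralizers, and has the advantage of being completely self-contained and of making transparent the mechanism (centralizers descend cleanly to the central quotient precisely when $G'\cap Z(G)$ is trivial) that the $GL(2,3)$ example illustrates. The paper's version buys brevity and situates the statement within the isoclinism framework used throughout, but proves nothing more than yours does.
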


\begin{proof}
Combining  \cite[p. 134]{hall} and \cite[Proposition 2.1]{baishyaS}, we get the result.
\end{proof}

The following result generalizes \cite[Corollary 3.2]{baishyaS}.

\begin{prop}\label{SAS}
Let $G$ be any $\C_{n}$-group such that $Z(G')$ is trivial. Then $G$ is isoclinic with $\frac{G}{Z(G)}$. In particular, $G$ is a primitive $\C_{n}$-group.
\end{prop}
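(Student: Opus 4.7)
The plan is to deduce both conclusions from a single structural consequence of the hypothesis: namely, that $G' \cap Z(G) = \{1\}$. Indeed, any element of $G' \cap Z(G)$ lies in $G'$ and commutes with every element of $G$, hence in particular with every element of $G'$, so $G' \cap Z(G) \subseteq Z(G')$. Because $Z(G')$ is trivial by hypothesis, this inclusion forces $G' \cap Z(G) = \{1\}$.

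To prove the main assertion, I would construct the isoclinism between $G$ and $H := G/Z(G)$ directly from the definition. The natural map $g \mapsto gZ(G)$ sends $G'$ onto $H' = G'Z(G)/Z(G)$ with kernel $G' \cap Z(G) = \{1\}$, giving an isomorphism $\phi: G' \to H'$. For the other required isomorphism, I need $Z(H) = \{1\}$, and this is where the hypothesis is invoked a second time: if $gZ(G) \in Z(H)$, then $[g, x] \in Z(G)$ for every $x \in G$, while also $[g, x] \in G'$, so $[g, x] \in G' \cap Z(G) = \{1\}$, meaning $g \in Z(G)$. Thus $H/Z(H) = H$, and the identity map is an isomorphism $\varphi: G/Z(G) \to H/Z(H)$. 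The compatibility condition then reduces to the tautology $\phi([g_1, g_2]) = [g_1, g_2]Z(G) = [g_1 Z(G), g_2 Z(G)]$, which completes the verification of isoclinism.

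For the \emph{in particular} clause, I would apply Proposition \ref{b1136} with $N = Z(G)$; this is legitimate because we have already shown $G' \cap Z(G) = \{1\}$, and it yields $|\Cent(G)| = |\Cent(G/Z(G))| = n$, which is precisely the condition defining a primitive $\C_n$-group.

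There is no serious obstacle here; the cleanest argument simply unfolds the definition of isoclinism once the inclusion $G' \cap Z(G) \subseteq Z(G')$ has been observed. The only mildly subtle point is recognizing that the hypothesis $Z(G') = 1$ is used twice in essentially the same way -- once to trivialize $G' \cap Z(G)$, and once again (through that trivialization) to force $Z(G/Z(G))$ to be trivial -- so that one could alternatively phrase the conclusion as saying any $\C_n$-group $G$ with $G' \cap Z(G) = \{1\}$ is already isoclinic to its own central quotient.
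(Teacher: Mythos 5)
Your proof is correct and follows essentially the same route as the paper: the key observation in both is that $G' \cap Z(G) \subseteq Z(G') = \{1\}$, after which the isoclinism and the appeal to Proposition \ref{b1136} are immediate. The only difference is that where the paper cites Hall (p.~135) for the fact that $G' \cap Z(G) = \{1\}$ forces $G$ to be isoclinic with $G/Z(G)$, you verify this directly from the definition (correctly noting along the way that $Z(G/Z(G))$ is then trivial), which is a harmless and self-contained substitute.
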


\begin{proof}
It follows from \cite[p. 135]{hall} and Proposition \ref{b1136}, by noting that in the present scenario, we have  $G' \cap Z(G)=\lbrace 1 \rbrace$. 
\end{proof}

We now give another class of primitive $\C_{n}$-groups. 

\begin{prop}\label{CG17}
Let $G$ be any $\C_{n}$-group such that $\frac{G}{Z(G)}=\frac{K}{Z(G)} \rtimes \frac{H}{Z(G)}$ is a Frobenius group with $K$ and $H$ abelian. Then $\mid \Cent(G)\mid= \mid \Cent(\frac{G}{Z(G)})\mid=\mid G' \mid+2$. 
\end{prop}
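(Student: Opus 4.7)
The plan is to compute $|\Cent(G)|$ and $|\Cent(G/Z(G))|$ directly from the Frobenius structure, and then to verify $|G'|=|\overline{K}|$ by establishing $G'\cap Z(G)=1$; here I write $\overline{G}:=G/Z(G)$, $\overline{K}:=K/Z(G)$, $\overline{H}:=H/Z(G)$. By Remark \ref{remark}, every quantity and every hypothesis in the statement is an isoclinism invariant (commutators in $K$ correspond under the isoclinism to commutators in the preimage of $\overline{K}$, so abelianness of $K$ and $H$ is preserved), hence I may assume $G$ is finite.

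First I would count centralizers. In $\overline{G}$, the Frobenius property gives $C_{\overline{G}}(\overline{x})=\overline{K}$ for every non-identity $\overline{x}\in\overline{K}$, while $C_{\overline{G}}(\overline{x})=\overline{H}^{\overline{g}}$ when $\overline{x}$ lies in the unique conjugate $\overline{H}^{\overline{g}}$ of the complement---both using abelianness of $\overline{K}$ and $\overline{H}$. Since Frobenius complements are self-normalizing, there are exactly $[\overline{G}:\overline{H}]=|\overline{K}|$ such conjugates, so $|\Cent(\overline{G})|=|\overline{K}|+2$. The same analysis lifted to $G$ (using $K\subseteq C_G(x)$ for $x\in K\setminus Z(G)$, $H^g\subseteq C_G(x)$ for $x\in H^g\setminus Z(G)$, and $N_G(H)=H$) yields $|\Cent(G)|=|\overline{K}|+2$. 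Moreover $\overline{G}'=\overline{K}$: the inclusion $\overline{G}'\subseteq\overline{K}$ is automatic since $\overline{G}/\overline{K}\cong\overline{H}$ is abelian, and for any $1\neq\overline{h}\in\overline{H}$ the map $\overline{k}\mapsto[\overline{k},\overline{h}]$ is an injective (hence bijective, by finiteness) endomorphism of the abelian group $\overline{K}$.

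The heart of the argument is showing $G'\cap Z(G)=1$, which will then yield $|G'|=|\overline{G}'|=|\overline{K}|$. Since $K$ and $H$ are abelian, $G'=[K,H]\le K$. Now $\overline{H}$, being an abelian Frobenius complement, is cyclic---this is classical, since every Sylow subgroup of a Frobenius complement is cyclic or generalized quaternion, and generalized quaternion groups are nonabelian. Pick a generator $\overline{h_0}$ of $\overline{H}$ and view $K$ additively as a $\mathbb{Z}[\overline{H}]$-module; then the augmentation ideal is the principal ideal $(\overline{h_0}-1)$, so $[K,H]=(h_0-1)K$. If $(h_0-1)k\in Z(G)$, then its image $(\overline{h_0}-1)\overline{k}$ in $\overline{K}$ is trivial, which by fixed-point-freeness of the Frobenius action forces $\overline{k}=1$, so $k\in Z(G)$; but then $(h_0-1)k=0$ because $\overline{H}$ fixes $Z(G)$ pointwise. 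Hence $[K,H]\cap Z(G)=0$.

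Putting the pieces together, $|\Cent(G)|=|\overline{K}|+2=|G'|+2=|\Cent(\overline{G})|$. I expect the main obstacle to be the last step, specifically the invocation of the classical cyclicity of abelian Frobenius complements. If one prefers to avoid that result, the same conclusion follows from a Sylow decomposition of $K$ using only $\gcd(|\overline{H}|,|\overline{K}|)=1$: for primes $p\mid|\overline{H}|$ one has $\overline{K}_p=0$, so $K_p=Z(G)_p$ is centralized by $\overline{H}$ and contributes nothing to $[K,H]$; for primes $p\nmid|\overline{H}|$, the identity $N_{\overline{H}}\cdot(\overline{h}-1)=0$ shows any $z\in[K,H]\cap Z(G)$ satisfies $|\overline{H}|z=0$, which annihilates its $p$-primary component since $\gcd(|\overline{H}|,p)=1$.
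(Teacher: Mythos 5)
Your proof is correct, and it is genuinely more self-contained than the paper's, which disposes of the statement in one line by citing \cite[Proposition 3.8]{baishyaS} together with ``the definition of Frobenius group.'' What that citation hides is exactly the content you supply: (i) the count $\mid \Cent(\overline{G})\mid = \mid\overline{K}\mid+2$ and its lift to $G$ (your lifting step is right, since $C_G(x)/Z(G)\subseteq C_{\overline{G}}(\overline{x})$ pins each centralizer down to $K$ or a conjugate of $H$); (ii) the identification $\overline{G}'=\overline{K}$ via the bijectivity of $\overline{k}\mapsto[\overline{k},\overline{h}]$ on the abelian kernel; and (iii) the key point $G'\cap Z(G)=1$, without which $\mid G'\mid$ could exceed $\mid\overline{K}\mid$. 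Your treatment of (iii) -- writing $G'=[K,H]=(h_0-1)K$ as a $\mathbb{Z}[\overline{H}]$-module and using fixed-point-freeness to force any central value $(h_0-1)k$ to vanish -- is the real added value, and your fallback via the norm identity $N_{\overline{H}}(\overline{h}-1)=0$ and coprimality of $\mid\overline{K}\mid$ and $\mid\overline{H}\mid$ neatly removes the dependence on the classical cyclicity of abelian Frobenius complements, so neither version of the argument rests on anything deep (note the normality of $\overline{K}$ is part of the hypothesis here, so Frobenius's theorem is not needed either). Two cosmetic remarks: the reduction to finite $G$ is immediate from $[G:Z(G)]<\infty$ and Schur's theorem (Remark \ref{remark}), so the discussion of isoclinism invariance of the hypotheses, while correct, is not needed; and in step (iii) one should say explicitly that $k\mapsto(h_0-1)k$ is an endomorphism of the abelian group $K$, so that $(h_0-1)K$ really is all of $[K,H]$ and not merely a generating set -- but you have in effect already used this observation in (ii).
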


\begin{proof}
Without any loss we may assume that $G$ is a finite group \cite{non}. Now, the result follows from the definition of Frobenius group and \cite[Proposition 3.8]{baishyaS}. 
\end{proof}

The following key result helps in determining the structure of any $\C_{n}$-group having prime order commutator subgroup.

\begin{prop}\label{n3}
Let $G$ be any $\C_{n}$-group such that $\mid G' \mid=p$, $p$ a prime. 
\begin{enumerate}
	\item If $\frac{G}{Z(G)}$ is non-abelian, then $G$ is isoclinic with $\frac{G}{Z(G)}$. In particular, $G$ is a primitive $\C_{n}$-group.
	\item If $\frac{G}{Z(G)}$ is abelian, then $G$ is isoclinic with an extraspecial $p$-group of order $p^{2a+1}$ for some $a$. In particular,  $n =\frac{p^{2a}-1}{p-1}+1$.
\end{enumerate} 
\end{prop}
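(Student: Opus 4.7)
My plan is to replace $G$ by a finite \emph{stem} group $M$ isoclinic to $G$, as granted by Remark \ref{remark}: such $M$ satisfies $Z(M) \subseteq M'$, $|M'| = |G'| = p$, $M/Z(M) \cong G/Z(G)$, and $|M' \cap Z(M)| = |G' \cap Z(G)|$. Since $|G'| = p$ is prime, this intersection is either trivial or equal to $G'$, and I expect the dichotomy between (a) and (b) to be precisely this alternative.

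For (a), I would first observe that $G/Z(G)$ non-abelian forces $G' \not\subseteq Z(G)$, hence $G' \cap Z(G) = \{1\}$. Transferring to $M$ gives $Z(M) \cap M' = \{1\}$, and combining this with $Z(M) \subseteq M'$ forces $Z(M) = \{1\}$. So $M \cong M/Z(M) \cong G/Z(G)$, yielding the isoclinism of $G$ with $G/Z(G)$; the primitive $\C_n$-property then follows immediately from Proposition \ref{b1136} applied with $N = Z(G)$.

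For (b), $G/Z(G)$ abelian gives $G' \subseteq Z(G)$, so $M' \subseteq Z(M) \subseteq M'$, i.e.\ $Z(M) = M'$ is cyclic of order $p$. The pivotal subsequent step is to show that $M$ is a $p$-group: since $M$ has class $2$, the identity $[x^p, y] = [x, y]^p$ holds, and because $|M'| = p$ this gives $x^p \in Z(M)$ for every $x \in M$, so $M/Z(M)$ is elementary abelian, whence $M$ is a finite $p$-group. The commutator then descends to a non-degenerate alternating $\mathbb{F}_p$-bilinear form on $M/Z(M)$ with values in $M' \cong \mathbb{F}_p$, which can exist only in even dimension; so $M/Z(M) \cong \mathbb{F}_p^{2a}$ and $|M| = p^{2a+1}$. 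Since $\Phi(M) = M^p M' = M' = Z(M)$, the group $M$ is extraspecial of the desired form.

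For the formula $n = (p^{2a}-1)/(p-1) + 1$, I would count centralizers in $M$: elements of $Z(M)$ contribute the single centralizer $M$; for $x \notin Z(M)$, $C_M(x)/Z(M)$ is the hyperplane $\bar{x}^\perp$ in $V := M/Z(M)$, and non-degeneracy of the form gives $C_M(x) = C_M(y)$ iff $\bar{x}$ and $\bar{y}$ span the same line in $V$, producing exactly $(p^{2a}-1)/(p-1)$ distinct non-central centralizers. The main obstacle I foresee is the step forcing $M$ to be a $p$-group in (b), where one must combine the class-$2$ commutator identity with $|M'| = p$ to reach $x^p \in Z(M)$ for all $x$; once that is in hand, the even-dimensionality is a standard linear-algebraic fact and the centralizer count is routine for extraspecial groups.
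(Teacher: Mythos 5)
Your proposal is correct. Part (a) is essentially the paper's argument: both reduce to the observation that $\mid G'\mid=p$ together with $G'\not\subseteq Z(G)$ forces $G'\cap Z(G)=\lbrace 1\rbrace$, and then invoke the Hall criterion (you route it through the stem group $M$ with $Z(M)\subseteq M'$, which collapses to $Z(M)=\lbrace 1\rbrace$ and so $M\cong G/Z(G)$; the paper cites Hall, p.~135 directly) plus Proposition \ref{b1136} for primitivity. The genuine difference is in part (b): the paper simply cites an earlier result of the author (Proposition 3.24 of the $F$-groups paper), whereas you give a self-contained proof. Your chain --- $Z(M)=M'$ of order $p$, the class-two identity $[x^p,y]=[x,y]^p=1$ forcing $M/Z(M)$ to be elementary abelian and hence $M$ a finite $p$-group with $\Phi(M)=M^pM'=Z(M)$, the non-degenerate alternating $\mathbb{F}_p$-form on $M/Z(M)$ forcing even dimension $2a$, and the bijection between non-central centralizers and lines of $\mathbb{F}_p^{2a}$ giving $n=\frac{p^{2a}-1}{p-1}+1$ --- is sound at every step (non-degeneracy holds because the radical of the form is exactly $Z(M)/Z(M)$, and two non-central elements share a centralizer iff their images span the same line, by non-degeneracy). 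What your version buys is independence from the external reference and an explicit, verifiable count of $\Cent(M)$; what the paper's version buys is brevity. Either is acceptable; yours is arguably preferable for a reader without access to the cited paper.
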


\begin{proof}

Without any loss we may assume that $G$ is a finite group \cite{non}. Now,

a) In this case we have $G' \cap Z(G)=\lbrace 1 \rbrace$ and hence by \cite[p. 135]{hall}, $G$ is isoclinic with $\frac{G}{Z(G)}$. Second part follows from Proposition \ref{b1136}.

b) It follows from \cite[Proposition 3.24]{baishyaF}.
\end{proof}

For finite groups $G$ with $\mid G' \mid=2$, we have the following specific result.

\begin{prop}\label{ncF2}
Let $G$ be any finite group. The following are equivalent.
\begin{enumerate}
	\item $\mid G'\mid=2$.
	\item $G=C(x)\cup C(y)\cup C(xy)$ for any $x, y \in G$ with $[x, y] \neq 1$.
	\item $G=A \times P$, where $A$ is  abelian and  $\mid P' \mid=2$.
\end{enumerate} 
\end{prop}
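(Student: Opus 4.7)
My plan is to prove the equivalences via $(a) \Leftrightarrow (c)$ and $(a) \Leftrightarrow (b)$, tacitly restricting to non-abelian $G$ (for abelian $G$, condition (b) is vacuously true while (a) and (c) fail, so the equivalence is to be read non-trivially).

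For $(a) \Leftrightarrow (c)$, the reverse direction is immediate from $G' = A' \times P' = P'$. For the forward direction, the key observation is that $|G'| = p$ prime forces $G' \leq Z(G)$ (because $G/C_G(G')$ embeds into $\Aut(G') = 1$), so $G/Z(G)$ is abelian and $G$ is nilpotent of class at most $2$. Then $G$ splits as the direct product of its Sylow subgroups, and $|G'| = \prod_p |G_p'| = 2$ isolates a single non-abelian Sylow factor at $p = 2$; take $P = G_2$ and $A$ the product of the odd Sylow subgroups.

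For $(a) \Rightarrow (b)$, with $G' = \langle z \rangle \leq Z(G)$ the commutator identity $[g, xy] = [g, y][g, x]^y$ simplifies to $[g, y] \cdot [g, x]$ because $z$ is central; since each commutator lies in $\{1, z\}$, any $g \notin C(x) \cup C(y)$ satisfies $[g, xy] = z^2 = 1$ and so lies in $C(xy)$, giving the desired covering.

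The substantive direction is $(b) \Rightarrow (a)$. Fix a non-commuting pair $x, y$; the three subgroups $C(x), C(y), C(xy)$ are proper and pairwise distinct (e.g.\ $C(x) = C(y)$ would force $y \in C(x)$), so $G$ is the union of three distinct proper subgroups. A standard inclusion-exclusion argument (coupled with the fact that no group is a union of two proper subgroups) then forces each of index $2$ and $G/(C(x) \cap C(y)) \cong V_4$, so every non-central element of $G$ has centralizer of index $2$ and satisfies $[G, x] = \{1, a_x\}$ for a well-defined $a_x = x^{-1} x^g$. The remaining task is to show that $a_x$ is a central involution independent of $x$, giving $G' = \langle a_x \rangle$ of order $2$. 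I expect the step $a_x^2 = 1$ to be the main obstacle and the one requiring (b) most essentially. A short calculation using $g^2 \in C(x)$ gives $a_x^g = a_x^{-1}$ for $g \notin C(x)$, so $a_x$ is central exactly when $a_x^2 = 1$; if instead $a_x^2 \neq 1$, then $C(a_x) = C(x)$ and the pair $(a_x, y)$ is non-commuting, so applying (b) to this new pair and comparing with the original cover forces any $h \notin C(x) \cup C(y)$ to commute with $(xy)(a_x y)^{-1} = x a_x^{-1}$, and since $C(x a_x^{-1}) = C(x) \cap C(a_x) = C(x)$ this collapses to the forbidden cover $G = C(x) \cup C(y)$. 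Independence of $x$ is then routine: for non-commuting $x, y$, $a_y = a_x^{-1} = a_x$; for commuting non-central $x, u$, a common non-commuter $v$ exists because $|C(x) \cup C(u)| \leq \tfrac{3}{4}|G|$, which gives $a_x = a_v = a_u$.
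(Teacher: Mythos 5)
Your reduction of (b) to the statement that every noncentral element has centralizer of index $2$ is sound, as are the directions (a)$\Rightarrow$(b) and (a)$\Leftrightarrow$(c) (though the parenthetical claim that $|G'|=p$ prime forces $G'\le Z(G)$ via $\Aut(G')=1$ is valid only for $p=2$, which is all you use). The genuine gap is in your elimination of the case $a_x^2\neq 1$. For $h\notin C(x)$ one has $x^h=xa_x$ and $a_x^h=a_x^{-1}$, hence $(xa_x^{-1})^h=xa_x\cdot a_x=xa_x^{2}$. Thus $h$ centralizes $xa_x^{-1}$ if and only if $a_x^3=1$; in particular, when $a_x$ has order $3$ the element $xa_x^{-1}$ is \emph{central}, your asserted equality $C(xa_x^{-1})=C(x)\cap C(a_x)=C(x)$ fails (the left-hand side is all of $G$, and in general $C(uv)\supseteq C(u)\cap C(v)$ need not be an equality), and no collapse to $G=C(x)\cup C(y)$ is obtained. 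Your argument therefore only rules out $o(a_x)\notin\{1,2,3\}$ and leaves the order-$3$ case open.

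The gap can be closed more cheaply than by comparing two covers: since $C(x)$ and $C(y)$ are normal of index $2$, the commutator $a_x=[x,y]=x^{-1}x^{y}$ lies in $C(x)$ (because $x^{y}\in C(x)^{y}=C(x)$) and $[y,x]=a_x^{-1}$ lies in $C(y)$ for the same reason, so $a_x$ commutes with both $x$ and $y$; then $x=x^{y^{2}}=(xa_x)^{y}=xa_x^{2}$ forces $a_x^{2}=1$ outright, after which your "independence of $x$" paragraph goes through. With that repair your route is a legitimate and genuinely more elementary alternative to the paper's: the paper deduces from (b) that $G$ has conjugate type $(1,2)$ and then invokes Ishikawa's isoclinism classification together with It\^{o}'s theorem to reach (c), and obtains (a) only as a corollary, whereas you would get $|G'|=2$ by bare-hands commutator calculus and derive the decomposition (c) from class-$2$ nilpotency and the Sylow factorization, with no external theorems.
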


\begin{proof}

(a) $\Longrightarrow$ (b)
Let $g \in G \setminus (C(x)\cup C(y))$. If $g \notin C(xy)$, then $[g, xy]=[g, x]=[g, y]$, which is impossible. 

(b) $\Longrightarrow$ (c)
Let $a \in G \setminus Z(G)$ be any element. Suppose $b \in G \setminus C(a)$. Then $G=C(a)\cup C(b)\cup C(ab)$; which implies $\mid C(a)\mid=\frac{\mid G \mid}{2}$. Therefore $G$ is isoclinic with an extraspecial $2$-group \cite{ish1}. Consequently, by Ito \cite{ito}, $G=A \times P$, where $A$ is abelian  and $\mid P' \mid=2$. 

(c) $\Longrightarrow$ (a) Clearly.
\end{proof}

\begin{cor}\label{ncF31}
Let $G$ be any  $\C_{n}$-group. If $\mid G'\mid=2$, then $\mid \Cent(G)\mid = \mid \frac{ G }{Z(G)} \mid$.
\end{cor}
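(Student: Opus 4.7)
The plan is to reduce everything to Proposition \ref{n3}(b). First I would observe that since $G$ is a $\C_n$-group, Remark \ref{remark} allows us to pass to an isoclinic finite group; since isoclinism preserves both $|\Cent(\cdot)|$ and $|G/Z(G)|$ (and $|G'|$), there is no loss in assuming $G$ is finite from the start.

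Next I would establish that when $|G'|=2$, the quotient $G/Z(G)$ is forced to be abelian. The key observation is that $G'$ is normal in $G$, so conjugation induces a homomorphism $G \to \Aut(G') = \Aut(C_2) = 1$. Hence every element of $G$ centralizes $G'$, which means $G' \subseteq Z(G)$, equivalently $G/Z(G)$ is abelian. This puts us squarely into case (b) of Proposition \ref{n3} with $p=2$.

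With $p=2$, Proposition \ref{n3}(b) says $G$ is isoclinic with an extraspecial $2$-group $E$ of order $2^{2a+1}$ for some $a$, and that
\[
n \;=\; \frac{2^{2a}-1}{2-1}+1 \;=\; 2^{2a}.
\]
Since isoclinism induces an isomorphism $G/Z(G) \cong E/Z(E)$, and an extraspecial $2$-group of order $2^{2a+1}$ has center of order $2$, we conclude
\[
\bigl|G/Z(G)\bigr| \;=\; \bigl|E/Z(E)\bigr| \;=\; 2^{2a} \;=\; n \;=\; |\Cent(G)|,
\]
which is the claim.

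I do not anticipate a genuine obstacle here: the only point requiring a short argument is the implication $|G'|=2 \Rightarrow G' \subseteq Z(G)$, and this is immediate because $\Aut(C_2)$ is trivial. Everything else is bookkeeping built on top of Proposition \ref{n3}(b), so the corollary really is a direct specialization of that proposition to $p=2$.
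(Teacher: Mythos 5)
Your proof is correct, and it takes a genuinely different (and more self-contained) route than the paper. The paper's own proof is a one-line citation: after reducing to the finite case via \cite{non}, it invokes \cite[Proposition 2.21]{baishyarima} wholesale. You instead derive the corollary from material already in the paper: the elementary observation that a normal subgroup of order $2$ is central (since $\Aut(C_2)$ is trivial) forces $G' \subseteq Z(G)$, so $G/Z(G)$ is abelian and Proposition \ref{n3}(b) applies with $p=2$, giving $n = 2^{2a}$; then the isoclinism with an extraspecial $2$-group of order $2^{2a+1}$ identifies $\mid G/Z(G) \mid$ with $2^{2a+1}/2 = 2^{2a} = n$. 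The arithmetic and the invariance of $\mid \Cent(\cdot)\mid$ and $G/Z(G)$ under isoclinism are all handled correctly. What your approach buys is independence from the external reference and a clear explanation of \emph{why} the statement holds (it is exactly the $p=2$ specialization of Proposition \ref{n3}(b), made unconditional by the fact that $\mid G' \mid = 2$ rules out case (a)); what the paper's citation buys is brevity. One could trim your first paragraph slightly, since Proposition \ref{n3} already performs the reduction to finite groups internally, but that is a matter of taste rather than a gap.
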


\begin{proof}
Without any loss we may assume that $G$ is a finite group \cite{non}. Now, the result follows from \cite[Proposition 2.21]{baishyarima}.
\end{proof}

 Recall that a group $G$ is called an F-group if $C(x) \subseteq C(y)$ implies $C(x)=C(y)$ for any $x, y \in G \setminus Z(G)$. In the following result, which improves \cite[Theorem 3]{nca}, $Z(x)$ denotes the center of $C(x)$ for any $x \in G$.

\begin{prop}\label{ncF1}
Let $G$ be a finite non-abelian $\C_{n}$-group. The following are equivalent.

\begin{enumerate}
	\item $G$ is an F-group and $n= \mid \frac{G}{Z(G)}\mid$.
	\item $G=A \times P$, where $A$ is an abelian subgroup of odd order and $P$ is a Sylow $2$-subgroup which is an F-group with $\mid \Cent(P) \mid = \mid \frac{P}{Z(P)}\mid$.
	\item $G=A \times P$, where $Z(x)=Z(P) \sqcup xZ(P)$ for all $x \in P \setminus Z(P)$.
\end{enumerate} 
\end{prop}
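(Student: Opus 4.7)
I would prove the proposition by the cycle $(a) \Rightarrow (b) \Rightarrow (c) \Rightarrow (a)$.

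The crux of $(a) \Rightarrow (b)$ is to show that $G/Z(G)$ is an elementary abelian $2$-group. Since $|\Cent(G)| = |G/Z(G)|$, the canonical map $gZ(G) \mapsto C(g)$ is a bijection, so distinct cosets of $Z(G)$ produce distinct centralizers. For any $g \notin Z(G)$ we always have $C(g) \subseteq C(g^2)$; if $g^2 \notin Z(G)$, the F-group hypothesis forces $C(g) = C(g^2)$, whence the bijection yields $gZ(G) = g^2Z(G)$ and $g \in Z(G)$, a contradiction. Hence $g^2 \in Z(G)$ for every non-central $g$, so $G/Z(G)$ has exponent $2$ and is therefore abelian. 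In particular every odd-order element lies in $Z(G)$, so the odd part $A$ of $Z(G)$ is a characteristic abelian subgroup; taking any Sylow $2$-subgroup $P$ and using coprime orders together with $A \subseteq Z(G)$ gives $G = A \times P$. The F-group property and the equality $|\Cent(P)| = |P/Z(P)|$ descend to $P$ via the bijection $\Cent(P) \leftrightarrow \Cent(G)$, $\; C_P(u) \mapsto A \times C_P(u)$, using $|G/Z(G)| = |P/Z(P)|$.

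For $(b) \Rightarrow (c)$, note that $P$ is itself a finite non-abelian F-group with $|\Cent(P)| = |P/Z(P)|$, so the same exponent-$2$ argument gives $x^2 \in Z(P)$ for every $x \in P$. For $x \in P \setminus Z(P)$, the inclusion $Z(P) \sqcup xZ(P) = \langle x, Z(P)\rangle \subseteq Z(C_P(x))$ is immediate. Conversely, if $y \in Z(C_P(x)) \setminus Z(P)$, then $y$ commutes with every element of $C_P(x)$, so $C_P(y) \supseteq C_P(x)$; the F-group property forces equality, and the bijection between cosets and centralizers forces $yZ(P) = xZ(P)$, proving the desired equality.

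For $(c) \Rightarrow (a)$, suppose $C_G(x) \subseteq C_G(y)$ with $x, y \notin Z(G)$; writing $x = au$, $y = bv$ with $a,b \in A$ and $u,v \in P \setminus Z(P)$, one has $C_P(u) \subseteq C_P(v)$, so $v$ centralizes $C_P(u)$, i.e.\ $v \in Z(C_P(u)) = Z(P) \sqcup uZ(P)$. Non-centrality of $v$ gives $v \in uZ(P)$, hence $C_P(v) = C_P(u)$, so $G$ is an F-group. The same argument shows distinct cosets $uZ(P)$ give distinct centralizers $C_P(u)$, so $|\Cent(P)| = |P/Z(P)|$ and consequently $|\Cent(G)| = |G/Z(G)|$.

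The main obstacle is the opening step of $(a) \Rightarrow (b)$, namely exploiting the joint force of the F-group property and the bijection $gZ(G) \leftrightarrow C(g)$ to conclude that $G/Z(G)$ has exponent $2$; once this is in hand, the splitting $G = A \times P$ is a routine coprime-order argument and the remaining equivalences become bookkeeping with the bijection.
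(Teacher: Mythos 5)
Your argument is correct, and it is genuinely different in character from the paper's proof, which disposes of all three implications by citation: $(a)\Rightarrow(b)$ invokes \cite[Theorem 3.8]{baishyaF} for the decomposition $G=A\times P$ and then \cite[Proposition 3.4]{baishyaF} together with \cite[Proposition 2.6]{baishyarima} to transfer the F-group property to $P$, while $(b)\Leftrightarrow(c)$ and $(c)\Rightarrow(a)$ are both read off from \cite[Proposition 3.4]{baishyaF}. You instead build everything from one elementary observation: when $n=\mid\frac{G}{Z(G)}\mid$ the well-defined surjection $gZ(G)\mapsto C(g)$ is a bijection, and combining it with $C(g)\subseteq C(g^2)$ and the F-group hypothesis forces $g^2\in Z(G)$, so $\frac{G}{Z(G)}$ is elementary abelian of exponent $2$; the splitting $G=A\times P$, the description of $Z(C_P(x))$, and the converse implication all then follow from the same coset--centralizer bijection. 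This makes the proposition self-contained and exposes the actual mechanism (the exponent-$2$ quotient) that the cited results encapsulate, at the cost of reproving material already in \cite{baishyaF}. Two small points worth making explicit if you write this up: in $(c)$ the symbol $Z(x)$ must be read as $Z(C_P(x))$ (as you tacitly do), since $Z(C_G(x))$ contains $A$ and the stated equality could not hold otherwise; and in $(c)\Rightarrow(a)$ you use that $A$ is abelian and central (so that $C_G(au)=A\times C_P(u)$ and $Z(G)=A\times Z(P)$), which is inherited from $(b)$ but not restated in $(c)$ --- it is worth flagging that the hypotheses on $A$ and $P$ are carried over.
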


\begin{proof}

(a) $\Longrightarrow$ (b) Suppose $n= \mid \frac{G}{Z(G)}\mid$. Then by \cite[Theorem 3.8]{baishyaF}, $G=A \times P$, where $A$ is an abelian subgroup  of odd order and $P$ is a Sylow $2$-subgroup  with $\mid \Cent(P) \mid = \mid \frac{P}{Z(P)}\mid$. Note that $G$ is isoclinic with $P$. Consequently, using \cite[Proposition 3.4]{baishyaF} and \cite[Proposition 2.6]{baishyarima}, we have $P$ is an F-group.

(b) $\Longrightarrow$ (c) It follows from  \cite[Proposition 3.4]{baishyaF}.

(c) $\Longrightarrow$ (a) It follows from  \cite[Proposition 3.4]{baishyaF}.
\end{proof}

The following result generalizes \cite[Proposition 2.7]{baishya2}. Recall that a finite group $G$ is said to be a $Z$-group is every Sylow subgroup of $G$ is cyclic.

\begin{prop}\label{co166}
Let $G$ be any $\C_{n}$-group such that $\frac{G}{Z(G)}$ is a $Z$-group. Then $G$ is isoclinic with $\frac{G}{Z(G)}$. In particular, $G$ is a primitive $\C_{n}$-group.
\end{prop}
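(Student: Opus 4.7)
The entire statement reduces to showing $G' \cap Z(G) = \{1\}$: once that is in hand, \cite[p.~135]{hall} gives the desired isoclinism of $G$ with $G/Z(G)$, and the primitive $\C_n$ conclusion then follows from Proposition~\ref{b1136} together with the invariance of $|\Cent(\cdot)|$ under isoclinism. So the plan is to first pass to a finite group via Remark~\ref{remark}, then establish $G' \cap Z(G) = \{1\}$, and finally assemble the conclusion.

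For the central step the cleanest route is via the Schur multiplier. Applying the Stallings--Stammbach five-term exact sequence to the central extension $1 \to Z(G) \to G \to G/Z(G) \to 1$ yields a surjection $M(G/Z(G)) \twoheadrightarrow G' \cap Z(G)$. Since $G/Z(G)$ is a $Z$-group, each of its Sylow subgroups is cyclic, and the standard transfer argument embeds every $p$-primary component $M(G/Z(G))_p$ in the Schur multiplier of a Sylow $p$-subgroup, which vanishes. Hence $M(G/Z(G)) = \{1\}$ and $G' \cap Z(G) = \{1\}$.

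A more elementary alternative, closer in spirit to the rest of the paper, uses the Zassenhaus canonical presentation
\[
G/Z(G) = \langle \bar a, \bar b \mid \bar a^m = \bar b^n = 1,\ \bar b^{-1}\bar a \bar b = \bar a^r\rangle,\qquad \gcd(m, n(r-1))=1,\ r^n\equiv 1\pmod m.
\]
Lifting $\bar a, \bar b$ to $A, B \in G$ yields $A^m, B^n \in Z(G)$ and $B^{-1}AB = A^r z$ with $z \in Z(G)$. Setting $c=[A,B]$, the relations force $A$ to centralize $c$ and $B^{-1}cB=c^r$, so $\langle c\rangle \trianglelefteq G$ with abelian quotient, and $G'=\langle c\rangle$ is cyclic. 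The identity $[A^k,B]=c^k$ at $k=m$ together with $A^m\in Z(G)$ gives $c^m=1$ and so $|G'|\le m$, while the coprimality $\gcd(m,r-1)=1$ forces $|(G/Z(G))'|=m$; the comparison $|G'/(G'\cap Z(G))|=|(G/Z(G))'|$ then pins down $G'\cap Z(G)=\{1\}$. The main obstacle in either approach is precisely the step that exploits the $Z$-group hypothesis --- vanishing Schur multiplier in the homological version, or equivalently the Zassenhaus coprimality $\gcd(m,r-1)=1$ in the elementary version --- while the rest is pure bookkeeping via results already established in the paper.
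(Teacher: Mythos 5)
Your main argument is exactly the paper's proof: the paper reduces to the finite case and then cites \cite[Corollary 2.1.3 and Proposition 2.1.7]{schur}, which are precisely the two facts you spell out --- the five-term-sequence surjection $M(G/Z(G)) \twoheadrightarrow G' \cap Z(G)$ and the vanishing of the Schur multiplier of a group with cyclic Sylow subgroups --- before invoking \cite[p.~135]{hall} and the normal-subgroup lemma for the primitive conclusion. Your Zassenhaus alternative is also correct and makes the key step self-contained: the identity $[A^m,B]=c^m=1$ gives $|G'|\mid m$, while $\gcd(m,r-1)=1$ gives $|G'/(G'\cap Z(G))|=|(G/Z(G))'|=m$, forcing $G'\cap Z(G)=\{1\}$; the trade-off is that it imports the metacyclic classification of $Z$-groups in place of the two multiplier facts.
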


\begin{proof}
Without any loss we may assume that $G$ is a finite group \cite{non}.
Moreover, by \cite[Corollary 2.1.3 and Proposition 2.1.7]{schur}, we have $G' \cap Z(G)=\lbrace 1 \rbrace$. Now the result follows from \cite[p. 135]{hall}. Last part follows from \cite[Lemma 3.2]{non}.
\end{proof}

In view of \cite[Corollary 2.5]{baishya}, we now have the followig result.

\begin{cor}\label{n3987}
Let $G$ be any group such that $\mid \frac{G}{Z(G)}\mid=pq$, $p< q$ are primes. Then $G$ is isoclinic with $\frac{G}{Z(G)}$. In particular, $G$ is a primitive $\C_{q+2}$-group. 
\end{cor}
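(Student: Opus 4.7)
The plan is to reduce this almost entirely to Propositions \ref{co166} and \ref{CG17}; the corollary packages what happens when $G/Z(G)$ is one of the very few groups that fit both hypotheses at once. First I would dispose of the degenerate case: if $G/Z(G)$ were abelian (in particular, cyclic), then $G$ itself would be abelian and $|G/Z(G)|$ would be $1$, not $pq$. Hence $G/Z(G)$ is a non-abelian group of order $pq$. Since its Sylow $p$- and Sylow $q$-subgroups have prime order, they are cyclic, so $G/Z(G)$ is a $Z$-group. Proposition \ref{co166} therefore applies and yields both that $G$ is isoclinic with $G/Z(G)$ and that $G$ is a primitive $\mathcal{C}_{n}$-group for some $n$.

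To pin down $n$, I would next recognise that a non-abelian group of order $pq$ with $p<q$ is the Frobenius group $C_{q}\rtimes C_{p}$, whose kernel has order $q$ and complement has order $p$. Writing this as
\[
\frac{G}{Z(G)}=\frac{K}{Z(G)}\rtimes \frac{H}{Z(G)}
\]
with $|K/Z(G)|=q$ and $|H/Z(G)|=p$, both $K$ and $H$ are extensions of $Z(G)$ by a cyclic group of prime order and are therefore abelian. Hence the hypotheses of Proposition \ref{CG17} are satisfied, giving
\[
|\Cent(G)|=|\Cent(G/Z(G))|=|G'|+2.
\]

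Finally, since $G$ is isoclinic with $G/Z(G)$, their commutator subgroups are isomorphic, and the commutator subgroup of a non-abelian group of order $pq$ is its Frobenius kernel of order $q$. Thus $|G'|=q$ and $n=q+2$, yielding exactly the asserted primitive $\mathcal{C}_{q+2}$-group structure. I do not anticipate a genuine obstacle here; the only thing one has to see is that a non-abelian group of order $pq$ simultaneously meets the $Z$-group hypothesis of Proposition \ref{co166} and the Frobenius-with-abelian-factors hypothesis of Proposition \ref{CG17}, after which the two propositions slot together to give both the isoclinism and the precise value of $|\Cent(G)|$.
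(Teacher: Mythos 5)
Your proof is correct, and for the main claim (the isoclinism) it follows exactly the paper's route: the paper also observes that a group of order $pq$ is a $Z$-group and invokes Proposition \ref{co166}. Where you diverge is in establishing the value $n=q+2$: the paper simply cites an external result (Corollary 2.5 of \cite{baishya}) for the centralizer count, whereas you derive it internally by recognising the non-abelian group of order $pq$ as a Frobenius group $C_q\rtimes C_p$ with abelian kernel and complement, applying Proposition \ref{CG17} to get $|\Cent(G)|=|G'|+2$, and then using the isoclinism to identify $G'$ with the Frobenius kernel of order $q$. This is a more self-contained argument and arguably preferable, since it makes the corollary depend only on results already proved in the paper. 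One small imprecision worth fixing: you justify that $K$ and $H$ are abelian by calling them ``extensions of $Z(G)$ by a cyclic group of prime order,'' but an extension of an abelian group by a cyclic group need not be abelian (e.g.\ $S_3$). The correct reason is that $Z(G)$ is \emph{central} in $K$ and in $H$, so $K/Z(K)$ and $H/Z(H)$ are quotients of the cyclic groups $K/Z(G)$ and $H/Z(G)$, hence cyclic, which forces $K$ and $H$ to be abelian. With that wording repaired the argument is complete.
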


Recall that a group $G$ is called perfect if $G=G'$.

\begin{lem}\label{nis}
Let $G$ be any group such that $G' \cap Z(G) = \lbrace 1 \rbrace$. Then $\frac{G}{Z(G)}$ is perfect  if and only if $G=Z(G) \times G'$. 
\end{lem}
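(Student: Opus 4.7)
The plan is to prove both directions by exploiting the standard identity $(G/Z(G))' = G'Z(G)/Z(G)$ and the hypothesis $G' \cap Z(G) = \{1\}$.

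For the forward direction, I would assume $G/Z(G)$ is perfect, i.e., $(G/Z(G))' = G/Z(G)$. Using the identity above, this reads $G'Z(G)/Z(G) = G/Z(G)$, equivalently $G = G'Z(G)$. Since $G'$ is normal in $G$, $Z(G)$ is central (so every element of $Z(G)$ commutes with every element of $G'$), and $G' \cap Z(G) = \{1\}$ by hypothesis, the three conditions required for an internal direct product are satisfied, giving $G = Z(G) \times G'$.

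For the reverse direction, I would assume $G = Z(G) \times G'$. Then the projection onto the second factor yields an isomorphism $G/Z(G) \cong G'$, so it suffices to show that $G'$ is a perfect group. Computing the commutator subgroup of a direct product, and using that $Z(G)$ is abelian and commutes with $G'$, I get
\[
G' = [G,G] = [Z(G) \times G', Z(G) \times G'] = [Z(G),Z(G)] \times [G',G'] = [G',G'],
\]
so $G'$ is perfect, and hence so is $G/Z(G)$.

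There is no real obstacle here: the statement unwinds directly from the definition of perfectness and the fact that the hypothesis $G' \cap Z(G) = \{1\}$ together with $G = G'Z(G)$ forces a direct product decomposition. The only point where one must be careful is in the backward direction, to note that the isomorphism $G/Z(G) \cong G'$ transports the perfectness question to $G'$ itself, which is then handled by the commutator computation for a direct product.
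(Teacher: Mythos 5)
Your proof is correct. Both directions are sound: the identity $(G/Z(G))' = G'Z(G)/Z(G)$ holds for arbitrary groups, so perfectness of $G/Z(G)$ is exactly the condition $G = G'Z(G)$, and combined with normality of both factors, centrality of $Z(G)$, and the hypothesis $G' \cap Z(G) = \{1\}$ this gives the internal direct product; conversely, the computation $[Z(G)\times G', Z(G)\times G'] = [G',G']$ shows $G'$ is perfect, and $G/Z(G) \cong G'$ transfers this back. The paper itself offers no argument here --- it simply cites an external result (Proposition 2.25 of a previous paper of the author) --- so your elementary, self-contained derivation from the quotient commutator identity and the direct product recognition criterion is a genuine improvement in readability, at no cost in generality: nothing in your argument requires finiteness, matching the lemma's stated scope of arbitrary groups.
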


\begin{proof}
The result follows from \cite[Proposition 2.25]{baishyarima}.
\end{proof}

In the following result, we provide some additional information to \cite[Corollary 2.1]{amiriS}, where $Sz(q)$ denotes the Suzuki group over the field with $q$ elements.

\begin{prop}\label{ssd}
Let $G$ be any group such that  $\frac{G}{Z(G)} \cong Sz(q)$, $q \neq 8$. Then $G$ is isoclinic with $G'$ and $\frac{G}{Z(G)}$. In particular,  $G=Z(G) \times G'$,  $ \mid \Cent(G) \mid= \mid \Cent(G') \mid= \mid \Cent(\frac{G}{Z(G)}) \mid$ and $\omega(G)=\omega(G')=\omega(\frac{G}{Z(G)})$. 
\end{prop}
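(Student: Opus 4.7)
The plan is to reduce to the finite case and exploit two classical facts about $Sz(q)$ for $q \neq 8$: first, that $Sz(q)$ is a non-abelian simple group (hence perfect), and second, that its Schur multiplier is trivial. By Remark \ref{remark}, since $G/Z(G) \cong Sz(q)$ has finite order, $Z(G)$ has finite index in $G$ and we may replace $G$ by a finite isoclinic representative without loss of generality.

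The first key step is to show $G' \cap Z(G) = \{1\}$. A classical theorem of Schur says that $G' \cap Z(G)$ is a homomorphic image of the Schur multiplier $M(G/Z(G)) = M(Sz(q))$, and it is well documented in the theory of finite simple groups that $M(Sz(q)) = 1$ precisely when $q \neq 8$. (Alternatively, \cite[Corollary 2.1]{amiriS}, which the present proposition is meant to sharpen, already records $G = Z(G) \times G'$ and can be invoked directly for the triviality of the intersection.)

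Once $G' \cap Z(G) = \{1\}$ is established, the remaining conclusions follow quickly from the preliminaries. Since $Sz(q)$ is perfect, Lemma \ref{nis} gives $G = Z(G) \times G'$, and projecting onto the second factor yields $G' \cong G/Z(G) \cong Sz(q)$. The isoclinism $G \sim G/Z(G)$ follows from \cite[p.~135]{hall} exactly as in the proof of Proposition \ref{SAS}, while the isoclinism $G \sim G'$ is immediate from the direct product decomposition, since splitting off an abelian direct summand preserves both the central quotient and the commutator subgroup up to canonical isomorphism. The equalities $|\Cent(G)| = |\Cent(G')| = |\Cent(G/Z(G))|$ and $\omega(G) = \omega(G') = \omega(G/Z(G))$ then follow because these quantities are isoclinism invariants (as recorded in \cite{baishyaS}).

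The main obstacle is really confined to the first step: recognising that the hypothesis $q \neq 8$ is exactly the condition under which $M(Sz(q))$ vanishes, and thereby identifying the correct classical input that forces the central intersection to be trivial. Everything beyond that point is a routine cascade through Lemma \ref{nis}, Hall's isoclinism criterion, and the invariance of $|\Cent(\cdot)|$ and $\omega(\cdot)$.
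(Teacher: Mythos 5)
Your proposal is correct and follows essentially the same route as the paper: the paper's proof cites \cite[Theorem 7.4.2 and Proposition 2.1.7]{schur} precisely for the two facts you identify (the vanishing of the Schur multiplier of $Sz(q)$ for $q \neq 8$, and the fact that $G' \cap Z(G)$ is an image of the multiplier of $G/Z(G)$), then applies Lemma \ref{nis} and the perfectness of $Sz(q)$ to obtain $G = Z(G) \times G'$ and the isoclinisms. The only cosmetic difference is that you derive the isoclinism $G \sim G'$ directly from the direct-product decomposition, where the paper cites \cite[Proposition 2.25]{baishyarima}; the substance is identical.
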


\begin{proof}
Using \cite[Theorem 7.4.2 and Proposition 2.1.7]{schur}, Lemma \ref{nis} and \cite[Proposition 2.25]{baishyarima}, we get the result.
\end{proof}

\begin{lem}\label{c1234}
Let $G$ be any finite group such that $\frac{G}{Z(G)} \cong D_{2n}$. Then $\mid G' \cap Z(G) \mid=1$ if $n$ is odd and $\mid G' \cap Z(G) \mid=2$ if $n$ is even. In particular, $G$ is isoclinic with $\frac{G}{Z(G)}$ if and only if $n$ is odd.
\end{lem}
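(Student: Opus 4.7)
The plan is to reduce the problem to a stem cover using Remark \ref{remark} and then invoke the classical computation of the Schur multiplier of the dihedral group. First, I would apply Remark \ref{remark} to replace $G$ by an isoclinic finite group $M$ with $Z(M) \subseteq M'$, so that
\[
|G' \cap Z(G)| = |M' \cap Z(M)| = |Z(M)|.
\]
Since isoclinism preserves the central factor, $M/Z(M) \cong D_{2n}$, and the central extension $1 \to Z(M) \to M \to D_{2n} \to 1$ is a stem extension. The $5$-term exact sequence in low-dimensional homology (or Hopf's formula) then gives that $Z(M)$ is a homomorphic image of the Schur multiplier of $D_{2n}$.

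Next I would invoke the classical fact that the Schur multiplier of $D_{2n}$ is trivial when $n$ is odd and cyclic of order two when $n$ is even. This immediately yields $|G' \cap Z(G)| = 1$ when $n$ is odd, and the upper bound $|G' \cap Z(G)| \le 2$ when $n$ is even.

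The main step is to rule out $|Z(M)| = 1$ in the case $n$ even. The argument is very short: if $Z(M)$ were trivial, then $M \cong M/Z(M) \cong D_{2n}$; but for $n$ even, $Z(D_{2n})$ is cyclic of order two (generated by $r^{n/2}$), contradicting $Z(M) = 1$. Hence $|Z(M)| = 2$, and therefore $|G' \cap Z(G)| = 2$.

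For the ``in particular'' clause, one direction is immediate: when $n$ is odd, $G' \cap Z(G) = \{1\}$, and \cite[p.~135]{hall} (the same result already used in Proposition \ref{n3}) gives that $G$ is isoclinic with $G/Z(G)$. For the converse I would use that the isomorphism class of the central factor is an isoclinism invariant: if $G$ were isoclinic with $D_{2n}$, then $D_{2n} \cong G/Z(G) \cong D_{2n}/Z(D_{2n})$, which forces $|Z(D_{2n})| = 1$ and hence $n$ odd. The Schur multiplier computation is the only nontrivial ingredient; everything else is bookkeeping.
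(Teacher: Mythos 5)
Your proof is correct and takes essentially the same route as the paper's one-line argument, which cites Karpilovsky's Propositions 2.1.7 and 2.11.4 for precisely the two ingredients you use: that $G'\cap Z(G)$ is a homomorphic image of the Schur multiplier of $G/Z(G)$, and that this multiplier is trivial for $n$ odd and of order two for $n$ even, with the ``in particular'' clause handled in both cases by Hall's criterion that isoclinism with the central quotient is equivalent to $G'\cap Z(G)=\{1\}$. Your explicit step ruling out $|G'\cap Z(G)|=1$ when $n$ is even (via the contradiction $D_{2n}\cong D_{2n}/Z(D_{2n})$) fills in a detail the paper leaves implicit.
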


\begin{proof}
In view of \cite[Propositions 2.1.7 and 2.11.4]{schur}, we have $\mid G' \cap Z(G) \mid=1$ if $n$ is odd and $\mid G' \cap Z(G) \mid=2$ if $n$ is even. Second part follows from \cite[p. 135]{hall}.
\end{proof}

\begin{cor}\label{coro}
Let $G$ be any finite group such that $\frac{G}{Z(G)} \cong D_{2n}$.
The following are equivalent.
\begin{enumerate}
    \item $n$ is odd.
	\item $G$ is primitive $\C_n$.
	\item $G$ is isoclinic with $\frac{G}{Z(G)}$.
	\item $\mid G' \cap Z(G) \mid=1$.
\end{enumerate} 
\end{cor}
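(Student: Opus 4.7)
The equivalences (a) $\Leftrightarrow$ (c) $\Leftrightarrow$ (d) fall out immediately from Lemma \ref{c1234}: the main assertion yields (a) $\Leftrightarrow$ (d), and the ``in particular'' clause yields (a) $\Leftrightarrow$ (c). Only the implications involving (b) require genuine work.

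For (d) $\Rightarrow$ (b), the plan is to apply Proposition \ref{b1136} with $N = Z(G)$: the assumption $G' \cap Z(G) = \{1\}$ forces $|\Cent(G)| = |\Cent(G/Z(G))| = |\Cent(D_{2n})|$, which is the required primitivity.

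For (b) $\Rightarrow$ (a) I would argue by contradiction. Suppose $G$ is primitive while $n$ is even; then Lemma \ref{c1234} gives $|G' \cap Z(G)| = 2$, and Remark \ref{remark} replaces $G$ by an isoclinic finite group $M$ with $Z(M) \subseteq M'$, $|Z(M)| = 2$, and $M/Z(M) \cong D_{2n}$, so $|M| = 4n$. Since isoclinism preserves both $|\Cent(-)|$ and the central quotient, the hypothesis becomes $|\Cent(M)| = |\Cent(D_{2n})|$, and the goal is to refute this.

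A routine count gives $|\Cent(D_{2n})| = n/2 + 2$ for $n$ even. On the side of $M$, write $Z(M) = \{1, z\}$; the preimage $R$ of the rotation subgroup is a central extension of a cyclic group by $Z(M)$, hence abelian of order $2n$, and so every non-central element of $R$ has centraliser exactly $R$, contributing one class. For each $x \in M \setminus R$ one has $C_M(x) \supseteq Z(M) \cup xZ(M)$, of order $4$; the crux is that equality holds, because if the rotation $r^{n/2}$ lifting the central involution of $D_{2n}$ commuted with $x$ then, since it already commutes with all of $R$, it would lie in $Z(M)$, forcing $|Z(M)| > 2$, and a parallel argument rules out the twisted reflection $xr^{n/2}$. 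Hence $|C_M(x)| = 4$, the $2n$ reflection-type elements pair up as $x \sim xz$ into $n$ classes, and $|\Cent(M)| = 1 + 1 + n = n+2 > n/2 + 2$ for $n \geq 2$, the contradiction sought. The main obstacle will be executing this count uniformly across the isoclinism class of $M$, which is not a single isomorphism type -- both $D_{4n}$ and $Q_{4n}$ realise the data.
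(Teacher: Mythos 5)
Your proposal is correct, and for the implications not involving (b) it runs on exactly the same fuel as the paper: Lemma \ref{c1234} gives (a) $\Leftrightarrow$ (c) $\Leftrightarrow$ (d), and your (d) $\Rightarrow$ (b) via Proposition \ref{b1136} with $N=Z(G)$ is precisely the paper's (a) $\Rightarrow$ (b). The genuine divergence is in closing the cycle from (b): the paper proves (b) $\Rightarrow$ (c) by citing Lemma \ref{c1234} together with \cite[Corollary 2.4, Proposition 2.9]{baishya}, i.e.\ it outsources the centralizer count to earlier work, whereas you prove (b) $\Rightarrow$ (a) by an explicit, self-contained computation: for $n$ even you pass to the isoclinic $M$ with $|Z(M)|=2$ and $|M|=4n$, show the preimage $R$ of the rotation subgroup is abelian and is the common centralizer of its non-central elements, show every $x\in M\setminus R$ has $C_M(x)=Z(M)\cup xZ(M)$ of order $4$, and conclude $|\Cent(M)|=n+2>n/2+2=|\Cent(D_{2n})|$. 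Your count is right (and your closing worry is unfounded: the argument never uses the isomorphism type of $M$, only $|Z(M)|=2$ and $M/Z(M)\cong D_{2n}$, so it already applies uniformly to $D_{4n}$, $Q_{4n}$ and any other representative). What your route buys is independence from the external results of \cite{baishya}, at the cost of a page of elementary computation; the paper's route is shorter but not self-contained. The only caveat is the degenerate case $n=2$, where $D_4$ is abelian and $|\Cent(D_4)|=1$ rather than $n/2+2$; the contradiction $n+2\neq 1$ still holds there, so nothing breaks, but the stated formula should be restricted to $n\geq 4$ even (or $n\geq 3$ assumed throughout, as is conventional for dihedral groups).
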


\begin{proof}
 (a) $\Longrightarrow$ (b) It follows combining Lemma \ref{c1234} and Proposition \ref{b1136}.

 (b) $\Longrightarrow$ (c) If $G$ is primitive $\C_n$, then  (c) follows combining  Lemma \ref{c1234} and  \cite[Corollary 2.4, Proposition 2.9]{baishya}.

(c) $\Longrightarrow$ (d) It follows from \cite[p. 135]{hall}.

(d) $\Longrightarrow$ (a) It follows from Lemma \ref{c1234}.
\end{proof}

\begin{prop}\label{prop9765}
For a group $G$, $\frac{G}{Z(G)} \cong D_8$ if and only if $G$ is isoclinic with  $D_{16}$.  
\end{prop}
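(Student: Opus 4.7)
The reverse direction is immediate from the definition of isoclinism, since $D_{16}/Z(D_{16}) \cong D_8$. So I focus on the forward direction. The plan is to pass from $G$ to a finite stem representative of its isoclinism class, determine its order to be $16$, and then exhibit an isoclinism with $D_{16}$ directly.

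First I would apply Remark \ref{remark}: since $G/Z(G) \cong D_8$ is finite, $G$ is a $\C_n$-group, hence isoclinic to a finite $M$ with $Z(M) \subseteq M'$ and $|M' \cap Z(M)| = |G' \cap Z(G)|$. Because $D_8 = D_{2\cdot 4}$ has $n = 4$ even, Lemma \ref{c1234} gives $|G' \cap Z(G)| = 2$; combined with $Z(M) \subseteq M'$ this forces $|Z(M)| = 2$ and $|M| = 16$. It therefore suffices to show that every group $M$ of order $16$ with $Z(M) \cong C_2$ and $M/Z(M) \cong D_8$ is isoclinic to $D_{16}$.

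For the second step I would invoke the classification of groups of order $16$: the non-abelian such groups with center of order $2$ are exactly the three $2$-groups of maximal class, $D_{16}$, $Q_{16}$, and $SD_{16}$, each with $M' \cong C_4$. Choosing standard generators $\bar{r}, \bar{s}$ of $D_8$ and lifts $r, s \in M$, the key observation is that $[s, r] \in M' \setminus Z(M)$, so $[s, r]$ has order $4$ and generates the cyclic group $M'$. The candidate isoclinism is the pair $(\varphi, \phi)$ with $\varphi: M/Z(M) \to D_{16}/Z(D_{16})$ the natural identification of central quotients with $D_8$, and $\phi: M' \to (D_{16})'$ the isomorphism $C_4 \to C_4$ sending $[s, r]$ to the corresponding commutator $[S, R]$ in $D_{16}$.

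The main obstacle is verifying that $(\varphi, \phi)$ respects all commutator values, not just those of the two chosen generator lifts. This comes down to the fact that in each of $D_{16}, Q_{16}, SD_{16}$ the element $\bar{s}$ acts on the cyclic commutator subgroup $M'$ by inversion while $\bar{r}$ acts trivially; once this is in hand, the $2$-generation of $D_8$ together with standard commutator identities propagates the agreement from the single pair $(\bar{s}, \bar{r})$ to all pairs. For $SD_{16}$ one additionally absorbs the discrepancy $[S, R] = R^2$ (versus $R^{-2}$ in $D_{16}$) by composing $\phi$ with inversion on $C_4$.
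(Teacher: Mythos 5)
Your argument is correct and follows the same route as the paper: Remark \ref{remark} replaces $G$ by a finite stem group $M$ with $Z(M)\subseteq M'$, Lemma \ref{c1234} forces $|Z(M)|=2$ and hence $|M|=16$, and one then identifies the isoclinism class. The only difference is that the paper leaves the final identification implicit (and leans on GAP for the analogous $D_{12}$ case), whereas you make it explicit by listing the three order-$16$ groups with center $C_2$ and checking directly that $D_{16}$, $Q_{16}$ and $SD_{16}$ are pairwise isoclinic -- a worthwhile elaboration, and sound as sketched.
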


\begin{proof}
By Remark \ref{remark}, $G$ is isoclinic with a finite group $M$ such that $Z(M) \subseteq M'$. In the present scenario, using Lemma \ref{c1234}, we have $\mid M \mid=16$ and consequently, $G$ is isoclinic with  $D_{16}$. Converse is trivial.
\end{proof}

\begin{cor}\label{xxx}
Let $H$ be a non-abelian subgroup of  $G$ with $\frac{G}{Z(G)} \cong D_{8}$. Then $H$ is isoclinic with $Q_8$ or $D_{16}$; and 
 $\mid \Cent(H)\mid=4$ or $6$ respectively.
\end{cor}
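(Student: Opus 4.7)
The plan is to partition non-abelian subgroups $H \leq G$ according to $\bar H := HZ(G)/Z(G)$, viewed as a subgroup of $\bar G := G/Z(G) \cong D_8$. First I would observe that $\bar H$ must be non-cyclic, since otherwise $HZ(G)$ is generated by a single element together with central elements, hence abelian, forcing $H$ to be abelian. The non-cyclic subgroups of $D_8$ are the two Klein four subgroups and $D_8$ itself, so exactly two cases remain.

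Suppose first that $\bar H \cong V_4$, so that $H/(H \cap Z(G)) \cong V_4$ is abelian. This puts $H' \subseteq H \cap Z(G) \subseteq Z(G)$, and combined with $H' \leq G'$ it lands $H' \subseteq G' \cap Z(G)$, which has order $2$ by Lemma \ref{c1234} (applied with $n=4$ even). Since $H$ is non-abelian, $|H'| = 2$. The candidates for $Z(H)/(H \cap Z(G)) \leq Z(V_4) = V_4$ are $V_4$, $C_2$, or trivial, and the first two would force $H/Z(H)$ cyclic, contradicting non-abelianness; so $Z(H) = H \cap Z(G)$ and $H/Z(H) \cong V_4$. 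Proposition \ref{n3}(b) then identifies $H$ up to isoclinism with the extraspecial $2$-group of order $8$, i.e., with $Q_8$, and Corollary \ref{ncF31} yields $|\Cent(H)| = |H/Z(H)| = 4$.

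Suppose next that $\bar H \cong D_8 = \bar G$, so that $HZ(G) = G$. Then for any $z \in Z(H)$ and any $g = hc \in G$ with $h \in H$, $c \in Z(G)$, a short computation gives $zg = zhc = hzc = hcz = gz$, so $z \in Z(G)$. This forces $Z(H) = H \cap Z(G)$ and $H/Z(H) \cong D_8$. Proposition \ref{prop9765} then identifies $H$ up to isoclinism with $D_{16}$, and the invariance of $|\Cent|$ under isoclinism (cited via Hall in Remark \ref{remark}) gives $|\Cent(H)| = |\Cent(D_{16})| = 6$.

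The main obstacle is isolating $Z(H)$ in each case, i.e., proving $Z(H) = H \cap Z(G)$. In the $V_4$ case this is a small order count inside $V_4$ together with the fact that $H/Z(H)$ is non-cyclic; in the $D_8$ case it rests on the less immediate consequence $HZ(G) = G$ of the equality $\bar H = \bar G$. Once these identifications are in hand, the statement reduces to direct appeals to Proposition \ref{n3}(b) or Proposition \ref{prop9765} together with the isoclinism-invariance of the number of centralizers.
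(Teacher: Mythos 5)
Your proof is correct, and it reaches the same two-case reduction as the paper ($H/Z(H)\cong C_2\times C_2$ or $D_8$, finished by the extraspecial classification and Proposition \ref{prop9765} respectively), but it gets there by a more elementary and self-contained route. The paper's proof is a one-line combination of citations whose engine is the observation that $G$ is a CA-group, from which \cite[Proposition 3.1]{baishyaS} gives $Z(H)=H\cap Z(G)$ for every non-abelian subgroup $H$ at once; then $H/Z(H)$ embeds in $D_8$ and \cite[Proposition 3.13]{baishyaS} plus Proposition \ref{prop9765} finish. You instead prove $Z(H)=H\cap Z(G)$ case by case: in the Klein-four case by an order count in $H/(H\cap Z(G))$ together with the non-cyclicity of $H/Z(H)$ and the bound $|H'|\le|G'\cap Z(G)|=2$ from Lemma \ref{c1234}, and in the $D_8$ case by the direct computation that $HZ(G)=G$ forces $Z(H)\subseteq Z(G)$. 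This buys independence from the CA-group machinery at the cost of a slightly longer argument; the paper's route is shorter and generalizes immediately to other CA central factors (it is reused verbatim for $D_{12}$, $A_4$, $A_5$). Two small touch-ups: you should note that $H$ is a $\C_m$-group (immediate since $[H:H\cap Z(G)]\le 8$) before invoking Proposition \ref{n3}(b) and Corollary \ref{ncF31}, and in the first case Proposition \ref{n3}(b) already yields $|\Cent(H)|=\frac{2^{2}-1}{2-1}+1=4$ directly, so the appeal to Corollary \ref{ncF31} is redundant.
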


\begin{proof}
Combining \cite[Propositions 3.1, 3.13]{baishyaS}, Proposition \ref{prop9765} and \cite[Lemma 3.2]{non} we get the result, by noting that $G$ is a CA-group.
\end{proof}

Using arguments similar to Proposition \ref{prop9765} and GAP \cite{gap}, we also have the following result.

\begin{prop}\label{bcc}
For a group $G$,  $\frac{G}{Z(G)} \cong D_{12}$ if and only if $G$ is isoclinic with  $D_{24}$.  
\end{prop}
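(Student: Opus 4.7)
The plan is to mimic the proof of Proposition \ref{prop9765} almost verbatim, with the orders adjusted to the $D_{12}$ setting and with one additional appeal to GAP to eliminate a second candidate. So I first invoke Remark \ref{remark} to pass from $G$ to an isoclinic finite group $M$ satisfying $Z(M)\subseteq M'$, and then analyze such an $M$ under the assumption $M/Z(M)\cong D_{12}$; this reduction is legitimate because isoclinism preserves both the central quotient and, up to isomorphism, the derived subgroup.

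Since here $n=6$ is even, Lemma \ref{c1234} gives $\mid M'\cap Z(M)\mid = 2$. The commutator subgroup of $D_{12}$ has order $3$, so the natural surjection from $M'$ onto $(M/Z(M))'$, whose kernel is $M'\cap Z(M)$, forces $\mid M'\mid = 6$. Because $Z(M)\subseteq M'$ gives $Z(M)=M'\cap Z(M)$, we deduce $\mid Z(M)\mid = 2$, and hence $\mid M\mid = 24$. Thus $M$ is a group of order $24$ with centre of order $2$, central quotient $D_{12}$, derived subgroup of order $6$, and $Z(M)$ contained in $M'$.

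It only remains to verify that every finite group of order $24$ satisfying these constraints is isoclinic with $D_{24}$. A direct inspection of the small-groups library in GAP \cite{gap} singles out $D_{24}$ and the dicyclic group of order $24$ as the only groups of order $24$ with $M/Z(M)\cong D_{12}$, and a further GAP check (or, equivalently, the explicit construction of the isoclinism isomorphisms $\varphi$ on $D_{12}$ and $\phi$ on $C_6$ matching the two commutator pairings) confirms that these two groups are isoclinic. The converse is trivial, since $D_{24}/Z(D_{24})\cong D_{12}$ and isoclinism preserves the central quotient.

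The main conceptual obstacle is exactly the one present in Proposition \ref{prop9765}: one must trust that the structural constraints $\mid M\mid = 24$, $Z(M)\subseteq M'$ and $M/Z(M)\cong D_{12}$ pin down the isoclinism class of $M$ uniquely. Once the order of $M$ is fixed at $24$ by the arithmetic of the previous paragraph, the remaining work is a finite, mechanical check.
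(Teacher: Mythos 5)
Your proof is correct and follows essentially the same route as the paper, which simply cites ``arguments similar to Proposition \ref{prop9765} and GAP'': reduce via Remark \ref{remark} to a finite $M$ with $Z(M)\subseteq M'$, use Lemma \ref{c1234} to get $\mid Z(M)\mid=\mid M'\cap Z(M)\mid=2$ and hence $\mid M\mid=24$, then finish by a GAP check. You spell out the order count for $M'$ more explicitly than the paper does, but the argument is the same.
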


\begin{cor}\label{nnnn}
Let $H$ be a non-abelian subgroup of  $G$ with $\frac{G}{Z(G)} \cong D_{12}$. Then $H$ is isoclinic with $Q_8, S_3$ or $D_{24}$; and 
 $\mid \Cent(H)\mid=4, 5$ or $8$ respectively.
\end{cor}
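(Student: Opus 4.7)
The plan is to mimic the argument of Corollary \ref{xxx} verbatim, with the role of $D_{16}$ now played by $D_{24}$. The first step is to invoke Proposition \ref{bcc}, which gives that $G$ is isoclinic with $D_{24}$. A direct inspection shows that $D_{24}$ is a CA-group: its nontrivial element centralizers are either the cyclic subgroup $\langle r\rangle$ of order $12$ or a Klein four subgroup of the form $\langle r^k s,\, r^6\rangle$, all of which are abelian. Since the CA-property is invariant under isoclinism, $G$ is itself a CA-group, which is what permits the subsequent appeal to \cite[Propositions 3.1, 3.13]{baishyaS}.

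Next I would enumerate the non-abelian subgroups of $D_{24}=\langle r, s \mid r^{12}=s^2=1,\; srs=r^{-1}\rangle$. A direct check, confirmable via GAP as the author already does in the proof of Proposition \ref{bcc}, shows that these are exactly the dihedral subgroups $\langle r^{12/m},\, r^i s\rangle$ for $m\in\{3,4,6,12\}$, giving copies of $S_3$, $D_8$, $D_{12}$, and $D_{24}$ itself. Folding by isoclinism: $D_{12}\cong S_3\times C_2$ is isoclinic with $S_3$, $D_8$ is isoclinic with $Q_8$ (the extraspecial $2$-group class), and $D_{24}$ forms its own isoclinism class. Via \cite[Propositions 3.1, 3.13]{baishyaS} applied to the isoclinism between $G$ and $D_{24}$, every non-abelian subgroup $H$ of $G$ is isoclinic with a non-abelian subgroup of $D_{24}$, hence with one of $Q_8$, $S_3$, $D_{24}$.

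Finally, because $|\Cent(\cdot)|$ is an isoclinism invariant for $\C_n$-groups by \cite[Lemma 3.2]{non}, the values of $|\Cent(H)|$ can be read off the three model groups directly: $|\Cent(Q_8)|=4$ (the whole group together with $\langle i\rangle$, $\langle j\rangle$, $\langle k\rangle$), $|\Cent(S_3)|=5$ (the whole group, the order-$3$ subgroup, and three order-$2$ subgroups), and $|\Cent(D_{24})|=8$ (the whole group, $\langle r\rangle$, and six distinct reflection centralizers of order $4$). The only conceptual obstacle is the subgroup-transfer step from $G$ down to $D_{24}$; but this is precisely what \cite[Propositions 3.1, 3.13]{baishyaS} accomplish, and they have already been deployed in the strictly parallel Corollary \ref{xxx}, so no essentially new difficulty arises.
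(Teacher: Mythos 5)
Your proposal is correct in substance and lands on the same three isoclinism classes and centralizer counts as the paper, but it routes the argument through the model group $D_{24}$ rather than through the central quotient. The paper's (very terse) proof works inside $\frac{G}{Z(G)} \cong D_{12}$: by \cite[Proposition 3.1]{baishyaS} (CA-group case) one has $\frac{H}{Z(H)} = \frac{H}{H \cap Z(G)} \cong \frac{HZ(G)}{Z(G)} \leq D_{12}$, so $\frac{H}{Z(H)}$ is $C_2 \times C_2$, $S_3$ or $D_{12}$, and then the three per-quotient classification results finish it: \cite[Proposition 3.13]{baishyaS} gives $H$ isoclinic with $Q_8$, Corollary \ref{n3987} (with $pq=6$) gives $H$ isoclinic with $S_3$, and Proposition \ref{bcc} gives $H$ isoclinic with $D_{24}$. (The paper's citation of Proposition \ref{prop9765} here appears to be a typo for Proposition \ref{bcc}.) You instead enumerate the non-abelian subgroups of $D_{24}$ and ``transfer'' them to $G$ along the isoclinism; the one step that needs care is your claim that every non-abelian subgroup of $G$ is isoclinic with a non-abelian subgroup of $D_{24}$ --- an isoclinism does not literally induce a correspondence of all subgroups, only of those containing the center, so one must first replace $H$ by $HZ(G)$ (which is isoclinic with $H$) before invoking the correspondence. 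That repair is standard and your conclusion is unaffected; indeed after the repair your enumeration of subgroups of $D_{24}$ containing its center reproduces exactly the three classes $Q_8$, $S_3$, $D_{24}$. Your verification that $D_{24}$ is a CA-group, your handling of the $S_3$ case via $D_{12} \cong S_3 \times C_2$ in place of Corollary \ref{n3987}, and your direct counts $\mid \Cent(Q_8)\mid = 4$, $\mid \Cent(S_3)\mid = 5$, $\mid \Cent(D_{24})\mid = 8$ are all correct and make the argument more self-contained than the paper's string of citations.
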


\begin{proof}
Combining \cite[Propositions 3.1, 3.13]{baishyaS}, Proposition \ref{prop9765}, Corollary \ref{n3987} and \cite[Lemma 3.2]{non} we get the result, by noting that $G$ is a CA-group.
\end{proof}

\section{Groups with $\frac{G}{Z(G)} \cong A_4, S_4$ or $A_5$}

Given any finite group $G$ with $\frac{G}{Z(G)} \cong A_4$, the author of \cite{ctc09} obtained $\mid \Cent(G)\mid=6$ or $8$. Here we give some additional information concerning such groups.
Note that for $G=SL(2, 3)$, we have $\frac{G}{Z(G)} \cong A_4$ and $G'=Q_8$. 

\begin{prop}\label{co133}
Let $G$ be any  group such that $\frac{G}{Z(G)} \cong A_4$ and  $G'$ be abelian. Then
\begin{enumerate}
	\item $G' \cong C_2 \times C_2$.
	\item $G' \cap Z(G) = \lbrace 1 \rbrace$.
	\item $G$ is isoclinic with $A_4$.
	\item $G$ is a primitive $\C_{6}$-group.
	\item $\omega(G)=\omega(\frac{G}{Z(G)})=5$.
	\item $C(x)$ for any $x \in G' \setminus Z(G)$ is the proper normal subgroup of $G$.
	\item $\mid G' \mid = \mid \frac{C(x)}{Z(G)}\mid$ for any $x \in G' \setminus Z(G)$.
\end{enumerate} 
\end{prop}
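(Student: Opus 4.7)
The plan is to first establish part (c), that $G$ is isoclinic with $A_4$, and then deduce the rest as essentially formal consequences.

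By Remark \ref{remark}, since $G/Z(G)\cong A_4$ is finite, $G$ is isoclinic with a finite stem group $M$ (satisfying $Z(M)\subseteq M'$), and isoclinism preserves the central factor, so $M/Z(M)\cong A_4$. The classification of stem groups of $A_4$ (based on the fact that the Schur multiplier of $A_4$ is $C_2$) leaves only two possibilities, $M\cong A_4$ or $M\cong SL(2,3)$. Since isoclinism also sends $G'$ isomorphically onto $M'$, and $G'$ is given to be abelian while $SL(2,3)'\cong Q_8$ is not, we must have $M\cong A_4$, giving (c). Part (a) then follows because $G'\cong M'=A_4'\cong C_2\times C_2$, and part (b) follows from Remark \ref{remark} since $|G'\cap Z(G)|=|M'\cap Z(M)|=|A_4'\cap Z(A_4)|=1$.

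Parts (d) and (e) come out as standard invariants. By Proposition \ref{b1136} applied with $N=Z(G)$, together with (b), we obtain $|\Cent(G)|=|\Cent(G/Z(G))|=|\Cent(A_4)|=6$, so $G$ is a primitive $\C_6$-group. For $\omega(G)$, a direct count in $A_4$ (pick a generator from each of the four cyclic subgroups of order $3$ together with any one double transposition) shows $\omega(A_4)=5$; moreover $\omega(G)=\omega(G/Z(G))$ because any two elements of $G$ congruent modulo $Z(G)$ commute, so a pairwise non-commuting set descends injectively to $G/Z(G)$, while any such set in $G/Z(G)$ lifts to one in $G$.

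For (f) and (g), let $x\in G'\setminus Z(G)$. Since $G'$ is abelian, $G'\subseteq C(x)$, and $Z(G)\subseteq C(x)$ trivially, so $G'Z(G)\subseteq C(x)$. Conversely, any $y\in C(x)$ satisfies $[\bar y,\bar x]=1$ in $G/Z(G)\cong A_4$; since the centralizer in $A_4$ of any non-identity element of $A_4'\cong C_2\times C_2$ equals $A_4'$ itself, and $A_4'$ corresponds to $G'Z(G)/Z(G)$, we obtain $y\in G'Z(G)$. Hence $C(x)=G'Z(G)$, which is normal in $G$ (both factors being normal) and proper (of index $3$). Finally $|C(x)/Z(G)|=|G'Z(G)/Z(G)|=|G'/(G'\cap Z(G))|=|G'|$ by (b), which gives (g). The hard part is the very first step, pinning down the stem group as $A_4$ rather than $SL(2,3)$; once that is done, everything else follows from isoclinism invariance or the transparent subgroup structure of $A_4$.
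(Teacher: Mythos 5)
Your proof is correct in substance but reaches the key conclusion by a different route than the paper. For part (a) the paper does not classify stem groups over $A_4$; instead it observes that for the stem group $M$ the subgroup $C(a)$, $a\in M'\setminus Z(M)$, equals $M'Z(M)$ and is an abelian normal subgroup of index $3$, and then invokes a lemma of Berkovich--Zhmud$'$ on groups with an abelian normal subgroup of prime index to force $M'\cong C_2\times C_2$ (whence $Z(M)=1$ and $M\cong A_4$). Your argument instead bounds $\mid M'\cap Z(M)\mid$ by the Schur multiplier $M(A_4)\cong C_2$, identifies the two stem groups $A_4$ and $SL(2,3)$, and eliminates $SL(2,3)$ because its derived subgroup $Q_8$ is non-abelian. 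Both work; your version is arguably more transparent and matches the technique the paper itself uses in the companion Proposition \ref{co188}, but it does lean on the (true, and standard) fact that $SL(2,3)$ is the unique stem group over $A_4$ with centre of order $2$, which you assert rather than prove. Your treatment of (f) and (g) via $C(x)=G'Z(G)$ is a clean, explicit version of what the paper leaves terse.

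One point needs repair. In (e) you justify $\omega(G)=\omega(G/Z(G))$ by saying a pairwise non-commuting set in $G$ ``descends'' to one in $G/Z(G)$. As a general principle this is false: $x,y$ may fail to commute while $[x,y]\in Z(G)$, so their images commute (e.g.\ $Q_8$ over $V_4$). The descent is valid here only because you have already proved $G'\cap Z(G)=\{1\}$ in (b), which guarantees that every nontrivial commutator survives modulo $Z(G)$; equivalently, you may simply cite the fact that $\omega$ is an isoclinism invariant together with (c). Either fix is one line, but as written the step rests on an incorrect general claim.
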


\begin{proof}
By Remark \ref{remark}, $G$ is isoclinic with a finite group  $M$ such that $Z(M) \subseteq M'$ and $\mid G' \cap Z(G) \mid=\mid M' \cap Z(M) \mid$. 

a) We have 
$(\frac{M}{Z(M)})' =\frac{M'Z(M)}{Z(M)} \cong \frac{M'}{M' \cap Z(M)}  \cong C_2 \times C_2$. Now, suppose $a \in M' \setminus Z(M)$. Then $ \frac{C(a)}{Z(M)} \cong C_2 \times C_2$ and $C(a)$ is an abelian normal subgroup of $M$ of index $3$. Therefore by \cite[Lemma 4, p. 303]{zumud}, we have $ G' \cong M' \cong C_2 \times C_2$.

b) It follows from the proof of part (a) that $G' \cap Z(G) = \lbrace 1 \rbrace$.

c) It follows from  \cite[pp. 135]{hall}.

d) It follows from \cite[Lemma 3.2]{non}.

e) It follows from \cite[Lemma 2.1]{non} and \cite[Lemma 2.6]{ed09}.

f) It follows from the fact that for any $x \in G' \setminus Z(G)$, $\frac{C(x)}{Z(G)}$ is the proper normal subgroup of $\frac{G}{Z(G)}$.

g) It follows from part (a).
\end{proof}

\begin{lem}\label{npcor127}
Let $G$ be any CA-group. If $G'$ is non-abelian, then $(\frac{G}{Z(G)})' \cong \frac{G'}{Z(G')}$.
\end{lem}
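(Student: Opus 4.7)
The plan is to reduce the claimed isomorphism to the identity $G' \cap Z(G) = Z(G')$ and then to derive that identity from the CA-hypothesis combined with the non-abelianness of $G'$.

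First I would invoke the standard commutator identity for a quotient by the center: since $[gZ(G),hZ(G)] = [g,h]Z(G)$, the derived subgroup of $G/Z(G)$ equals $G'Z(G)/Z(G)$, and the second isomorphism theorem gives
\[
\left(\tfrac{G}{Z(G)}\right)' \;=\; \tfrac{G'Z(G)}{Z(G)} \;\cong\; \tfrac{G'}{G' \cap Z(G)}.
\]
Hence it suffices to prove $G' \cap Z(G) = Z(G')$. One inclusion is immediate: any element of $G'$ which centralizes all of $G$ certainly centralizes the subgroup $G'$, so $G' \cap Z(G) \subseteq Z(G')$.

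The reverse inclusion $Z(G') \subseteq G' \cap Z(G)$ is the only nontrivial step, and this is where the CA-hypothesis together with the assumption that $G'$ is non-abelian both enter. Let $x \in Z(G')$; trivially $x \in G'$, so it suffices to show $x \in Z(G)$. Suppose for contradiction that $x \notin Z(G)$. The CA-property then forces $C(x)$ to be abelian. But $x$ centralizes every element of $G'$, so $G' \subseteq C(x)$, making $G'$ abelian and contradicting the hypothesis. Therefore $x \in Z(G)$, as required.

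The only obstacle worth flagging is verifying that the CA-property is truly being used in its strongest form: it is crucial that the centralizer of every non-central element is abelian (not merely, say, that centralizers of commutator elements are abelian), since we do not know a priori that $x \in Z(G')$ lies in any distinguished subset of $G$. Once this is observed, the argument is a two-line application of CA plus the quotient identity, and the proof is complete.
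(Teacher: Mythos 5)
Your proof is correct and matches the substance of the paper's argument: the paper simply cites an external proposition asserting that in a CA-group $H \cap Z(G) = Z(H)$ for every non-abelian subgroup $H$, and your argument is exactly a self-contained proof of that fact in the special case $H = G'$, combined with the standard identity $\left(\tfrac{G}{Z(G)}\right)' = \tfrac{G'Z(G)}{Z(G)} \cong \tfrac{G'}{G'\cap Z(G)}$. No gaps.
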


\begin{proof}
It follows from \cite[Proposition 3.1]{baishyaS}.
\end{proof}

\begin{prop}\label{co188}
Let $G$ be any  group such that $\frac{G}{Z(G)} \cong A_4$ and  $G'$ be non-abelian. Then
\begin{enumerate}
	\item $G' \cong Q_8$.
    \item $G' \cap Z(G)=Z(G') \cong C_2$.
    \item $G $ is isoclinic with $SL(2, 3)$.
	\item $G$ is a $\C_{8}$-group.
	\item $\omega(G)=7$.
	\item $C(x) \ntrianglelefteq G$ for any $x \in G \setminus Z(G)$.
	
\end{enumerate} 
\end{prop}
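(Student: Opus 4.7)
The plan is to mirror the approach used in Proposition \ref{co133}: apply Remark \ref{remark} to replace $G$ by an isoclinic finite stem group $M$ (meaning $Z(M) \subseteq M'$) with $M/Z(M) \cong A_4$ and $M'$ non-abelian; identify $M$ explicitly as $SL(2,3)$; and then transfer the six claimed invariants back to $G$ via the isoclinism.

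To identify $M$, note that since $Z(M) \subseteq M'$ we have $M'/Z(M) = (M/Z(M))' = V_4$, so $|M'| = 4|Z(M)|$. If $Z(M)$ were trivial, then $M \cong A_4$ and $M' \cong V_4$ would be abelian, contradicting the hypothesis (isoclinism preserves the commutator subgroup up to isomorphism). Hence $Z(M) \neq 1$. On the other hand $M$ is a stem extension of $A_4$, so $Z(M)$ embeds into the Schur multiplier $H_2(A_4,\mathbb{Z}) \cong C_2$ (cf.\ \cite{schur}). Therefore $|Z(M)| = 2$ and $M$ must be the unique stem cover of $A_4$, namely $SL(2,3)$.

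Granted this, (a) and (c) are immediate: $G' \cong M' = SL(2,3)' = Q_8$, and $G$ is isoclinic with $SL(2,3)$ by construction. For (b), Remark \ref{remark} gives $|G' \cap Z(G)| = |M' \cap Z(M)| = |Z(M)| = 2$; combining the general inclusion $G' \cap Z(G) \subseteq Z(G')$ with $|Z(G')| = |Z(Q_8)| = 2$ forces equality, proving $G' \cap Z(G) = Z(G') \cong C_2$. Parts (d) and (e) then follow from the direct calculations $|\Cent(SL(2,3))| = 8$ and $\omega(SL(2,3)) = 7$ (the latter via the seven maximal abelian subgroups of $SL(2,3)$: four copies of $C_6$ and three copies of $C_4$), together with the isoclinism-invariance of $|\Cent(\cdot)|$ (\cite[Lemma 3.2]{non}) and of $\omega$ (as in the proof of Proposition \ref{co133}(e), via \cite[Lemma 2.1]{non} and \cite[Lemma 2.6]{ed09}).

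For (f), I would fix any $x \in G \setminus Z(G)$ and analyze through its image $\bar x \in A_4$, which has order $2$ or $3$. If $\bar x$ has order $3$, then $C_{A_4}(\bar x) = \langle \bar x \rangle$, so $C_G(x)/Z(G) = \langle \bar x \rangle$, a non-normal subgroup of $A_4$; hence $C_G(x) \ntrianglelefteq G$. If $\bar x$ has order $2$, then $C_{A_4}(\bar x) = V_4$, so $C_G(x)/Z(G)$ is a subgroup of $V_4$ containing $\langle \bar x \rangle$, hence equals either $\langle \bar x \rangle$ or $V_4$. The main obstacle is ruling out the second alternative: if $C_G(x)/Z(G) = V_4$, then $C_G(x)$ is the full preimage of $V_4$, i.e.\ $C_G(x) = G'$, which forces $x \in Z(G')$; by (b) this gives $x \in G' \cap Z(G) \subseteq Z(G)$, contradicting the choice of $x$. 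So $C_G(x)/Z(G) = \langle \bar x \rangle$, a non-normal subgroup of $A_4$, and the desired conclusion $C_G(x) \ntrianglelefteq G$ follows.
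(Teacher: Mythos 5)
Your proof is correct, but it reaches the key identification $M \cong SL(2,3)$ by a different route than the paper. The paper first shows $M$ is a CA-group (via \cite[Lemma 2.1]{baishya2}), deduces $M' \cap Z(M) = Z(M') \cong C_2$ from \cite[Proposition 3.1]{baishyaS} together with Schur-multiplier information from \cite{schur}, obtains $G' \cong Q_8$ via Lemma \ref{npcor127} and \cite[Lemma 7]{machale}, and then invokes GAP to pin down the isoclinism type; parts (d)--(f) are handled by the observation that $M$ admits no (normal) centralizer of index $3$ together with \cite[Proposition 2.12]{baishya2}. You instead observe directly that $M$ is a stem extension of $A_4$, so $Z(M)$ is bounded by the Schur multiplier $C_2$, is nontrivial because $M'$ is non-abelian, and hence $M$ is the unique stem cover $SL(2,3)$; everything else is then read off from $SL(2,3)$ by direct computation. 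Your route is more self-contained (no GAP, no CA-group machinery) and your treatment of (f) is actually more complete than the paper's one-line justification, since you handle the order-$2$ and order-$3$ images separately. Two small points to tighten: (i) in a stem extension the kernel is a \emph{quotient} of the Schur multiplier rather than a subgroup, though for $C_2$ this makes no difference, and the uniqueness of the stem cover of $A_4$ deserves a word (it follows, e.g., because $\gcd(|A_4/A_4'|,|M(A_4)|)=1$); (ii) in (f), the full preimage of $V_4$ in $G$ is $G'Z(G)$, not $G'$ --- but the argument survives: writing $x = g'z$ with $g' \in G'$, $z \in Z(G)$, the hypothesis $G' \subseteq C(x)$ forces $g' \in Z(G') = G' \cap Z(G)$ by (b), whence $x \in Z(G)$, the desired contradiction.
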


\begin{proof}
By Remark \ref{remark}, $G$ is isoclinic with a finite group  $M$ such that $Z(M) \subseteq M'$ and $\mid G' \cap Z(G) \mid=\mid M' \cap Z(M) \mid$.  

a) 
In view of  \cite[p. 135]{hall}, we have $G' \cap Z(G)$ is non-trivial. Again, using \cite[Lemma 2.1]{baishya2}, we see that $M$ is a CA-group. In the present scenario, applying \cite[Proposition 3.1]{baishyaS} and  \cite[p. 278 and Proposition 2.17]{schur}, we have $M' \cap Z(M)=Z(M') \cong C_2$. Now, the result follows using Lemma \ref{npcor127} and \cite[Lemma 7]{machale}. 

b) Using \cite[Lemma 2.1]{baishya2} we see that $G$ is a CA-group and therefore by \cite[Proposition 3.1]{baishyaS} and part (a), we have $G' \cap Z(G)=Z(G') \cong C_2$.

c) Using GAP \cite{gap}, we get the result by noting that $\mid Z(M) \mid =2$.

d) In the present scenario, $M$ cannot have a centralizer of index $3$ and consequently, applying \cite[Proposition 2.12]{baishya2} and \cite[Lemma 3.2]{non},  $G$ is a $\C_{8}$-group.

e)  It follows from \cite[Lemma 2.1]{non} and \cite[Lemma 2.6]{ed09}.

f) It follows from the fact that $G$ cannot have a normal centralizer of index $3$.

\end{proof}

\begin{cor}\label{ccc}
Let $H$ be a non-abelian subgroup of $G$ with $\frac{G}{Z(G)} \cong A_4$. Then $H$ is isoclinic with $Q_8, A_4$ or $SL(2, 3)$; and 
 $\mid \Cent(H)\mid=4, 6$ or $8$ respectively.
\end{cor}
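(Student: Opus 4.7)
The plan is to analyze $H/Z(H)$ through the subgroup structure of $A_4$ together with the fact that every subgroup of a CA-group is CA. First, I would note that $G$ is a CA-group (since $G/Z(G) \cong A_4$ is a CA-group; this follows from \cite[Lemma 2.1]{baishya2} exactly as used in the proof of Proposition \ref{co188}(b)), and hence so is $H$ by \cite[Proposition 3.1]{baishyaS}. Because $H \cap Z(G) \subseteq Z(H)$, the canonical map yields a surjection $HZ(G)/Z(G) \twoheadrightarrow H/Z(H)$, with $HZ(G)/Z(G)$ embedded in $A_4$.

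Enumerating the subgroups of $A_4$ (namely $\{1\}, C_2, C_3, C_2 \times C_2, A_4$) together with their normal subgroups shows that the only non-cyclic quotients arising from subgroups of $A_4$ are $C_2 \times C_2$ and $A_4$. Since $H$ is non-abelian, $H/Z(H)$ is non-cyclic, so $H/Z(H) \cong C_2 \times C_2$ or $H/Z(H) \cong A_4$.

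In the case $H/Z(H) \cong C_2 \times C_2$, Cipolla's theorem (recorded in Section \ref{S:intro}) immediately gives $|\Cent(H)| = 4$. Moreover, the commutator pairing on $H/Z(H)$ is bilinear and alternating, so on the four-group it forces $|H'| = 2$, and Proposition \ref{n3}(b) with $p = 2,\, a = 1$ then shows $H$ is isoclinic with an extraspecial $2$-group of order $8$, hence with $Q_8$ (since $D_8$ and $Q_8$ are isoclinic). In the case $H/Z(H) \cong A_4$, I apply Propositions \ref{co133} and \ref{co188} directly to $H$: if $H'$ is abelian, then $H$ is isoclinic with $A_4$ and is a primitive $\C_6$-group, giving $|\Cent(H)| = 6$; if $H'$ is non-abelian, then $H$ is isoclinic with $SL(2,3)$ and is a $\C_8$-group, giving $|\Cent(H)| = 8$.

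The main obstacle I anticipate is the first case, since $H/Z(H) \cong C_2 \times C_2$ is not covered by any single proposition of Section 3 and needs to be dispatched either via Proposition \ref{n3}(b) after one establishes $|H'| = 2$, or by invoking a classification result such as \cite[Proposition 3.13]{baishyaS}, which plays the analogous role in the proofs of Corollaries \ref{xxx} and \ref{nnnn}. The rest of the argument is a bookkeeping check that the quotients $HZ(G)/Z(G)$ consistent with $H$ being non-abelian are precisely the two already considered, together with a final invocation of \cite[Lemma 3.2]{non} to transfer centralizer counts across isoclinism.
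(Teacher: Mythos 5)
Your proposal is correct and follows essentially the same route as the paper: use the fact that $G$ is a CA-group (via \cite[Lemma 2.1]{baishya2}) together with \cite[Proposition 3.1]{baishyaS} to identify $H/Z(H)$ with a (non-cyclic) subgroup of $A_4$, dispatch the $C_2\times C_2$ case by the extraspecial classification, and apply Propositions \ref{co133} and \ref{co188} in the $A_4$ case. The paper's proof is just a one-line citation of these same ingredients, so your write-up is in effect the expanded version of it.
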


\begin{proof}
It follows from \cite[Proposition 3.1]{baishyaS} and Propositions \ref{co133}, \ref{co188} by noting that  $G$ is a CA-group \cite[Lemma 2.1]{baishya2}. 
\end{proof}

We also have the following result which improves  \cite[Theorem 3.5]{en09}.
\begin{cor}\label{ttt}
 $G$ is a primitive $\C_{6}$-group if and only if $G$ is isoclinic with $A_4$.
\end{cor}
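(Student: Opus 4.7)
The plan is to prove each implication separately. The $(\Leftarrow)$ direction is essentially a repackaging of Proposition~\ref{co133}, while $(\Rightarrow)$ requires a finite-reduction step and a classification of groups with six element centralizers.

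For $(\Leftarrow)$, suppose $G$ is isoclinic with $A_4$. Unwinding the definition of isoclinism gives $G/Z(G)\cong A_4/Z(A_4)=A_4$ and $G'\cong (A_4)'\cong C_2\times C_2$, so $G'$ is abelian. Proposition~\ref{co133} then asserts that $G$ is a primitive $\C_6$-group.

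For $(\Rightarrow)$, assume $G$ is a primitive $\C_6$-group. By Remark~\ref{remark} we may replace $G$ with an isoclinic finite group $M$ satisfying $Z(M)\subseteq M'$; since isoclinism preserves both $|\Cent(\cdot)|$ and the central factor, $M$ is itself primitive $\C_6$, and it suffices to show that $M$ is isoclinic with $A_4$. The crucial step is to identify $M/Z(M)\cong A_4$. Corollary~\ref{coro} rules out $M/Z(M)\cong D_{2n}$ with $n$ odd (we would need $n+2=6$, forcing $n=4$, which is even), and Corollary~\ref{n3987} rules out $|M/Z(M)|=pq$ with $p<q$ primes (we would need $q+2=6$, forcing $q=4$, not prime); invoking the classification of groups with six element centralizers from \cite[Theorem~3.5]{en09} (which this corollary improves) then leaves $M/Z(M)\cong A_4$ as the only possibility. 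Proposition~\ref{co188} subsequently forces $M'$ to be abelian (otherwise $|\Cent(M)|=8\neq 6$), and Proposition~\ref{co133}(c) delivers the isoclinism of $M$, and hence of $G$, with $A_4$.

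The principal obstacle is the middle step, namely narrowing $M/Z(M)$ down to $A_4$. Corollaries~\ref{coro} and~\ref{n3987} only exclude the most conspicuous alternative central factors, so a complete argument depends either on a full classification of capable groups with six element centralizers or on a careful appeal to the classification already present in \cite{en09}. Once this is in place, everything else is a direct consequence of Propositions~\ref{co133} and~\ref{co188}.
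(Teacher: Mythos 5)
Your argument follows the paper's proof essentially verbatim: both directions rest on Remark \ref{remark} together with \cite[Theorem 3.5]{en09} to pin down $\frac{G}{Z(G)}\cong A_4$, and then on Propositions \ref{co133} and \ref{co188} to force $G'$ abelian and conclude the isoclinism with $A_4$. The extra exclusions via Corollaries \ref{coro} and \ref{n3987} are harmless but unnecessary, since the appeal to the classification in \cite{en09} (which the paper also makes) already does all the work.
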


\begin{proof}
Suppose $G$ is a primitive $\C_{6}$-group. By \cite[Theorem 3.5]{en09} and Remark \ref{remark}, we have  $\frac{G}{Z(G)} \cong A_4$. Now, the result follows from Propositions \ref{co133} and \ref{co188}. Conversely, 
if $G$ is  isoclinic with $A_4$, then by Proposition \ref{co133}, $G$ is a primitive $\C_{6}$-group. 
\end{proof}

The next two propositions improves \cite[Theorem 1.3]{rostami}.

\begin{prop}\label{c999}
Let $G$ be any  group such that $\frac{G}{Z(G)} \cong S_4$. Then the following are equivalent.

\begin{enumerate}
    \item $G$ is a CA-group.
    \item $\mid G' \cap Z(G) \mid=2$.
    \item $G$ is isoclinic with $GL(2, 3)$.
    \item $G' \cong SL(2, 3)$.
    \item $\omega(G)=13$. 
\end{enumerate} 
\end{prop}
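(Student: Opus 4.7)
The plan is to reduce the whole statement to the stem-group level via Remark \ref{remark} and then to run the implications as a cycle (a)$\Rightarrow$(b)$\Rightarrow$(c)$\Rightarrow$(d)$\Rightarrow$(e)$\Rightarrow$(a), anchoring everything to the two (isoclinic) stem covers of $S_4$. By Remark \ref{remark}, $G$ is isoclinic to a finite $M$ with $Z(M)\subseteq M'$, $M/Z(M)\cong S_4$, and $|G'\cap Z(G)|=|Z(M)|$. Since the Schur multiplier of $S_4$ is $C_2$, we have $|Z(M)|\in\{1,2\}$, hence $|M|\in\{24,48\}$.

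For (a)$\Rightarrow$(b), I would use that the CA property is an isoclinism invariant (already invoked in Proposition \ref{co188} via \cite[Proposition 3.1]{baishyaS}); so $M$ is CA. But $S_4$ is not CA, since the centralizer of any double transposition is $D_8$. Therefore $M\not\cong S_4$, giving $|Z(M)|=2$ and hence (b). For (b)$\Rightarrow$(c), with $|M|=48$, $|Z(M)|=2$ and $M/Z(M)\cong S_4$, a GAP enumeration (in the spirit of Propositions \ref{prop9765} and \ref{bcc}) yields exactly two candidates, $GL(2,3)$ and the binary octahedral group; both have commutator subgroup $SL(2,3)$ and induce the same commutator pairing on $S_4$, so they are mutually isoclinic, and $G$ is isoclinic with $GL(2,3)$. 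Then (c)$\Rightarrow$(d) is the isomorphism $\phi\colon G'\to GL(2,3)'=SL(2,3)$ built into the definition of isoclinism, and (d)$\Rightarrow$(b) is immediate from $|G'|=24$ and $|(G/Z(G))'|=|A_4|=12$, which force $|G'\cap Z(G)|=2$.

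To complete the cycle I would show (d)$\Rightarrow$(e)$\Rightarrow$(a). Combining (d)$\Rightarrow$(b)$\Rightarrow$(c), $G$ is isoclinic with $GL(2,3)$, which is a CA-$\C_{14}$-group; so $\omega(GL(2,3))=|\Cent(GL(2,3))|-1=13$, and by the isoclinism-invariance of $\omega$ (used in Propositions \ref{co133}(e) and \ref{co188}(e) via \cite[Lemma 2.1]{non} and \cite[Lemma 2.6]{ed09}), $\omega(G)=13$. For (e)$\Rightarrow$(a), the only remaining possibility is $|M|=24$, i.e.\ $M\cong S_4$; but the maximal abelian subgroups of $S_4$ are the four Klein four-groups, the four $C_3$'s, and the three $C_4$'s, a total of $11$, so $\omega(S_4)\leq 11<13$, a contradiction. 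Hence $|M|=48$, $G$ is isoclinic with $GL(2,3)$, and $G$ is CA.

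The one step that is not pure bookkeeping is (b)$\Rightarrow$(c): the identification of the two order-$48$ stem covers of $S_4$ and the verification that they are isoclinic. Everything else then follows mechanically from Remark \ref{remark}, the known Schur multiplier of $S_4$, and the isoclinism-invariants of the CA property and of $\omega$.
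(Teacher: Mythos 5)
Your proof is correct and follows essentially the same route as the paper: reduce to a stem group $M$ via Remark \ref{remark}, bound $|Z(M)|$ by the Schur multiplier of $S_4$, identify the order-$48$ stem cover by a GAP check, and transfer the CA property and $\omega$ across the isoclinism. The only differences are cosmetic --- you make the GAP step explicit (naming the binary octahedral group as the second, isoclinic, stem cover) and replace the paper's exact value $\omega(S_4)=10$ by the weaker but sufficient bound $\omega(S_4)\le 11$ obtained by counting maximal abelian subgroups.
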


\begin{proof}

By Remark \ref{remark}, $G$ is isoclinic with a finite group  $M$ such that $Z(M) \subseteq M'$ and $\mid G' \cap Z(G) \mid=\mid M' \cap Z(M) \mid$. Now, 

(a) $\Longrightarrow$ (b) If $G$ is a CA-group, then in view of \cite[Proposition 2.1]{baishyaS} and \cite[p. 135]{hall}, we have $G' \cap Z(G)$ is non-trivial. In the present scenario, using \cite[p. 279 and Proposition 2.1.7]{machale}, we have $\mid G' \cap Z(G) \mid=2$.

(b) $\Longrightarrow$ (c) 
If $\mid G' \cap Z(G) \mid=2$, then $\mid M \mid=48$.
Now, using GAP \cite{gap}, we have $G$ is isoclinic with $GL(2, 3)$. 

(c) $\Longrightarrow$ (d)
It follows from the definition of isoclinism.

(d) $\Longrightarrow$ (e) 
If $G' \cong M' \cong  SL(2, 3)$, then $\mid M \mid=48$ and  using GAP \cite{gap}, we have $M$ is isoclinic with $GL(2, 3)$. Now, the result follows 
using \cite[Proposition 2.1]{baishyaS}.

(e) $\Longrightarrow$ (a) Suppose $\omega(G)=13$. In view of \cite[Proposition 2.1]{baishyaS} and \cite[p. 135]{hall}, we have $G' \cap Z(G)$ is non-trivial, by noting that $\omega(S_4)=10$. In the present scenario, 
by \cite[p. 279 and Proposition 2.1.7]{machale}, we have $\mid G' \cap Z(G) \mid=2$,  Therefore using part (b), we have $G$ is isoclinic with $GL(2, 3)$ which is a CA-group. Now, the result follows from \cite[Proposition 2.1]{baishyaS}.
\end{proof}

\begin{prop}\label{b12}
Let $G$ be any  group such that $\frac{G}{Z(G)} \cong S_4$. Then the following are equivalent.

\begin{enumerate}
    \item $G$ is a not a CA-group.
    \item $\mid G' \cap Z(G) \mid=1$.
    \item $G$ is isoclinic with $S_4$.
    \item $G' \cong A_4$.
    \item $\omega(G)=10$. 
    \item $ \mid nacent(G) \mid=4$.
\end{enumerate} 
\end{prop}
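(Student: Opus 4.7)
The strategy is to build a cycle of implications centered on condition (b), $|G' \cap Z(G)| = 1$, complementing the case $|G' \cap Z(G)| = 2$ handled in Proposition \ref{c999}.

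First I would invoke Remark \ref{remark} to pass to a finite group $M$ isoclinic with $G$ such that $Z(M) \subseteq M'$, $M/Z(M) \cong S_4$, and $|M' \cap Z(M)| = |G' \cap Z(G)|$. The calculation already recorded in the proof of Proposition \ref{c999}, relying on the fact that the Schur multiplier of $S_4$ has order $2$, shows $|Z(M)| \le 2$. Together with the equivalence (a)$\Leftrightarrow$(b) of Proposition \ref{c999}, this immediately yields (a)$\Leftrightarrow$(b) here: being non-CA is exactly the remaining option among $|G' \cap Z(G)| \in \{1, 2\}$.

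Next I would establish the block (b)$\Leftrightarrow$(c)$\Leftrightarrow$(d). If $|G' \cap Z(G)| = 1$, then $|Z(M)| = 1$ (using $Z(M) \subseteq M'$), forcing $M \cong S_4$; conversely, isoclinism with $S_4$ gives $|G' \cap Z(G)| = |S_4' \cap Z(S_4)| = 1$. Isoclinism transports $G' \cong S_4' \cong A_4$, giving (c)$\Rightarrow$(d); while (d)$\Rightarrow$(b) follows from the inclusion $G' \cap Z(G) \subseteq Z(G') = Z(A_4) = \{1\}$. For (c)$\Leftrightarrow$(e) I would combine the isoclinism invariance of $\omega$ from \cite[Proposition 2.1]{baishyaS} with the known value $\omega(S_4) = 10$ (also used in Proposition \ref{c999}); the reverse direction (e)$\Rightarrow$(a) is a direct contradiction with Proposition \ref{c999}(e), which would force $\omega(G) = 13$ in the CA case.

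To close the cycle with (f), I would appeal to the isoclinism invariance of $|nacent(\cdot)|$ (via \cite[Proposition 3.1]{baishyaS}) and verify directly that $|nacent(S_4)| = 4$: the non-abelian centralizers in $S_4$ are $S_4$ itself together with the three pairwise distinct dihedral centralizers $C_{S_4}((ij)(kl)) \cong D_8$ of the three double transpositions. The converse (f)$\Rightarrow$(a) is transparent, since a CA-group has $nacent(G) = \{G\}$ and hence $|nacent(G)| = 1 \neq 4$. The only step I expect to require any real bookkeeping is the concrete identification of the non-abelian centralizers of $S_4$, in particular checking that the three $D_8$'s are pairwise distinct; everything else is a formal chase through Remark \ref{remark}, Proposition \ref{c999}, and the stated isoclinism invariants.
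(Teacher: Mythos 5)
Your proposal is correct and follows essentially the same route as the paper: reduce via Remark \ref{remark} to a finite $M$ with $Z(M)\subseteq M'$, use the Schur multiplier of $S_4$ to force $\mid G'\cap Z(G)\mid\in\{1,2\}$, invoke Proposition \ref{c999} for the complementary CA case, apply Hall's criterion for (b)$\Leftrightarrow$(c), and use isoclinism invariance of $\omega$ and $nacent$. The only differences are cosmetic — you organize the equivalences as a hub around (b)/(c) rather than the paper's cycle, and you compute $\mid nacent(S_4)\mid=4$ directly where the paper cites an invariance result from \cite{baishyaS}.
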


\begin{proof}

By Remark \ref{remark}, $G$ is isoclinic with a finite group  $M$ such that $Z(M) \subseteq M'$ and $\mid G' \cap Z(G) \mid=\mid M' \cap Z(M) \mid$. Now, 

(a) $\Longrightarrow$ (b) Suppose $G$ is  not a CA-group. Then in view of \cite[p. 279 and Proposition 2.1.7]{machale} and Proposition \ref{c999}, we have $\mid G' \cap Z(G) \mid=1$.

(b) $\Longrightarrow$ (c) 
It follows from \cite[p. 135]{hall}.

(c) $\Longrightarrow$ (d)
It follows from the definition of isoclinism.

(d) $\Longrightarrow$ (e) 
Suppose $G' \cong M' \cong A_4$. Then $Z(M)$ is trivial and consequently, $G$ is isoclinic with $S_4$. Now, using \cite[Lemma 2.1]{non}, we have 
$\omega(G)=\omega (S_4))=10$.

(e) $\Longrightarrow$ (f)
Using \cite[p. 279 and Proposition 2.1.7]{machale} and Proposition \ref{c999}, we have $\mid G' \cap Z(G) \mid=1$. Consequently, by \cite[p. 135]{hall}, we have $G$ is isoclinic with $S_4$. Now, the result follows from \cite[Proposition 2.1]{baishyaS}.

(f) $\Longrightarrow$ (a) Clearly.

\end{proof}

As an immediate corollary, we have the following result. See \cite[Theorem 1.3]{rostami}.

\begin{cor}\label{co33}
Let $G$ be any  group such that $\frac{G}{Z(G)} \cong S_4$. Then $G$ is a primitive $\C_{14}$-group. Moreover, $\omega(G)=10$ or $13$.
\end{cor}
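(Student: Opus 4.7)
The plan is to reduce the statement to the dichotomy already established in Propositions \ref{c999} and \ref{b12}. Since $G/Z(G) \cong S_4$, either $G$ is a CA-group or it is not, and the two propositions identify exactly one isoclinism class in each case: namely $G$ is isoclinic with $GL(2,3)$ in the CA case, and isoclinic with $S_4$ in the non-CA case.

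First I would argue the primitive $\C_{14}$ claim. By Remark \ref{remark} together with \cite[Lemma 3.2]{non}, the invariant $|\Cent(\cdot)|$ is constant on an isoclinism class, and likewise $|\Cent(G/Z(G))| = |\Cent(S_4)|$. A direct inspection (or GAP verification) shows $|\Cent(S_4)| = 14$ and $|\Cent(GL(2,3))| = 14$. Therefore in both cases we obtain
\[
|\Cent(G)| = |\Cent(G/Z(G))| = 14,
\]
which is the definition of a primitive $\C_{14}$-group.

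Next I would handle the $\omega(G)$ statement. This is immediate: Proposition \ref{c999}(e) gives $\omega(G) = 13$ whenever $G$ is isoclinic with $GL(2,3)$, and Proposition \ref{b12}(e) gives $\omega(G) = 10$ whenever $G$ is isoclinic with $S_4$. Since these two cases exhaust all possibilities (by the CA/non-CA dichotomy used in the two propositions), we conclude $\omega(G) \in \{10, 13\}$.

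There is no real obstacle here: the work has already been done in Propositions \ref{c999} and \ref{b12}, and the corollary just packages their consequences. The only small verification needed is the explicit values $|\Cent(S_4)| = |\Cent(GL(2,3))| = 14$, but these are standard (indeed, $|\Cent(S_4)| = 14$ is the motivating example in the literature on primitive $\C_n$-groups, compare \cite[Theorem 1.3]{rostami}).
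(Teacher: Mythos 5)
Your proposal is correct and matches the paper's (implicit) argument: the paper states this as an immediate corollary of Propositions \ref{c999} and \ref{b12}, and your proof simply makes that explicit via the CA/non-CA dichotomy, the isoclinism invariance of $\mid \Cent(\cdot)\mid$, and the values $\mid \Cent(S_4)\mid=\mid \Cent(\GL(2,3))\mid=14$ (the latter already noted in the proof of Proposition \ref{b1136}).
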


As another application, we also have the following result.

\begin{cor}\label{co1}
Let $G$ be any  finite solvable CA-group. Then its derived length $dl(G) \leq 4$; with equality if and only if $G$ is isoclinic with $\GL(2, 3)$.
\end{cor}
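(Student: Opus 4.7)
The plan is to reduce to the stem case via isoclinism, and then case-analyze $\bar G := G/Z(G)$, invoking the paper's Propositions on the $A_4$ and $S_4$ central-quotient cases together with the classical structure of finite solvable $CA$-groups to handle the remaining cases.

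By Remark~\ref{remark}, $G$ is isoclinic with a finite group $M$ with $Z(M) \le M'$. The isoclinism isomorphism $\phi\colon G' \to M'$ restricts inductively to isomorphisms $G^{(k)} \cong M^{(k)}$ for every $k \ge 1$, so $dl(G) = dl(M)$; and isoclinism preserves the $CA$-property via the canonical bijection between $\Cent(G)$ and $\Cent(M)$ (cf.\ \cite[Proposition~2.1]{baishyaS}). Hence I may assume $G$ is finite with $Z(G) \le G'$.

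The classical structure theorem for finite solvable $CA$-groups (after Weisner and Schmidt) leaves only the following possibilities for $\bar G$ up to isoclinism: trivial (so $G$ is abelian); a non-abelian $p$-group; a Frobenius group with abelian kernel and complement; $A_4$; or $S_4$. If $\bar G$ is trivial then $dl(G) = 1$. If $G$ is a non-abelian $CA$ $p$-group, the covering $G = \bigcup_{x \notin Z(G)} C(x)$ by abelian centralizers together with the $p$-group structure forces $G'$ to be abelian, so $dl(G) \le 2$. If $\bar G$ is Frobenius with abelian kernel $\bar K$ and complement $\bar H$, then the preimage $K$ of $\bar K$ is abelian and $G/K \cong \bar H$ is abelian, so $G' \le K$ and hence $G'' = 1$, giving $dl(G) \le 2$. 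If $\bar G \cong A_4$, Propositions~\ref{co133} and~\ref{co188} yield that $G$ is isoclinic with $A_4$ or with $SL(2,3)$, so $dl(G) \in \{2,3\}$. Finally, if $\bar G \cong S_4$, Proposition~\ref{c999} forces $G$ (being $CA$) to be isoclinic with $\GL(2,3)$, and the derived chain $\GL(2,3) \supset SL(2,3) \supset Q_8 \supset \{\pm I\} \supset \{1\}$ gives $dl(G) = 4$.

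Combining the cases, $dl(G) \le 4$, with equality precisely when $\bar G \cong S_4$, i.e., iff $G$ is isoclinic with $\GL(2,3)$. The main obstacle is the classification step used to exhaust these five isoclinism types of $\bar G$; the rest is a routine tabulation using the earlier propositions. A more self-contained variant would avoid the Weisner/Schmidt appeal by working directly with the centralizer-partition $G = Z(G) \sqcup \bigsqcup_i (C_i \setminus Z(G))$ characteristic of $CA$-groups, though the case analysis proceeds identically.
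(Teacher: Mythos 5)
Your overall architecture (reduce to a stem group by isoclinism, then locate the $dl=4$ case at $\frac{G}{Z(G)}\cong S_4$ and finish with Proposition~\ref{c999}) matches the paper's endgame, but the paper gets the bound $dl(G)\le 4$ and the reduction to the $S_4$ case in one stroke by citing \cite[Proposition~3.2(c)]{ctc095}, whereas you replace that citation with a case analysis resting on a ``classical structure theorem for finite solvable CA-groups (after Weisner and Schmidt)'' that you neither state precisely nor reference. That classification is the entire content of the argument: without a verifiable statement guaranteeing that $\frac{G}{Z(G)}$ can only be trivial, a non-abelian $p$-group, a Frobenius group with abelian kernel and complement, $A_4$, or $S_4$, the exhaustion of cases is an assumption, not a proof. (You acknowledge this is ``the main obstacle,'' but it is not an obstacle one can wave past; it is exactly the external input the paper supplies via \cite{ctc095}.) A second unproved load-bearing claim is that a non-abelian CA $p$-group is metabelian: ``the covering by abelian centralizers together with the $p$-group structure forces $G'$ to be abelian'' is an assertion, not an argument.

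There is also a concrete error in the Frobenius case. From $\bar K = K/Z(G)$ abelian you conclude that the preimage $K$ is abelian; this is false in general -- for $G=SL(2,3)$ the central quotient $A_4$ is Frobenius with abelian kernel $V_4$, whose preimage is $Q_8$. The correct deduction is only $K'\le Z(G)$, hence $G''\le Z(G)$ and $dl(G)\le 3$; this still keeps the final bound intact, but your stated chain ``$K$ abelian, so $G''=1$, so $dl(G)\le 2$'' is wrong and would, if taken literally, also contradict your own $A_4$ case. I would recommend either quoting the Dolfi--Herzog--Jabara result as the paper does, or, if you want a self-contained proof, writing out and proving (or precisely citing) the solvable CA-classification and the metabelianity of CA $p$-groups before running the case analysis.
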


\begin{proof}
For the first part see \cite[Proposition 3.2(c)]{ctc095}. For the second part, in view of \cite[Proposition 3.2(c)]{ctc095}, the only possibility is  $\frac{G}{Z(G)} \cong S_4$. Now, the result follows from Proposition \ref{c999}.
\end{proof}

The following result  improves \cite[Theorem 1]{ashrafi}.

\begin{prop}\label{ncFf1}
Let $G$ be any group such that  $\frac{G}{Z(G)} \cong A_5$. Then 

\begin{enumerate}
	\item $G$ is a CA-group.
	\item $G' \cong A_5$ or $SL(2, 5)$.
	\item $H \cap Z(G)=Z(H)$ for any non abelian subgroup $H$ of $G$.
	\item $G' \cap Z(G)=Z(G')$  is of order $1$ or $2$.
	\item $G$ is isoclinic with $A_5$ or $SL(2, 5)$.
	\item $\mid \Cent(G)\mid=22$ or $32$.
	\item $\omega(G)=21$ or $31$.
	\item $\mid \Cent(G)\mid=22$ iff $ \mid G' \cap Z(G) \mid=  1 $.
	\item $\mid \Cent(G)\mid=22$ iff $G=Z(G) \times A_5$.
	\item $\mid \Cent(G)\mid=32$ iff $ \mid G' \cap Z(G) \mid = 2$.
	
\end{enumerate} 
\end{prop}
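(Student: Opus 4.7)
The plan is to apply Remark \ref{remark} to pass to a finite representative $M$ of the isoclinism class of $G$ with $Z(M)\subseteq M'$, pin down $M$ via the Schur multiplier of $A_5$, and then transport the ten conclusions back to $G$ using the isoclinism invariants collected in Section 2. By Remark \ref{remark} we obtain a finite group $M$ isoclinic with $G$ with $Z(M)\subseteq M'$ and $|M'\cap Z(M)|=|G'\cap Z(G)|$. The isoclinism gives $M/Z(M)\cong A_5$, so $M/Z(M)$ is perfect and, together with $Z(M)\subseteq M'$, this forces $M=M'$. Hence $M$ is a stem extension of the perfect group $A_5$. Using the classical fact that $H_2(A_5,\mathbb{Z})\cong C_2$ together with Schur's description of stem extensions of perfect groups (\cite{schur}), $|Z(M)|$ must divide $2$, so $M\cong A_5$ or $M\cong SL(2,5)$. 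This yields (e); since the commutator subgroup and the quantities $|\Cent(\cdot)|$ and $\omega(\cdot)$ are isoclinism invariants (\cite[Proposition 2.1]{baishyaS}, \cite[Lemma 2.1]{non}), a direct computation (or GAP check) on $A_5$ and $SL(2,5)$ yields (b), (f), (g).

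For (a) one observes that both $A_5$ and $SL(2,5)$ are CA-groups, and being a CA-group is an isoclinism invariant by \cite[Proposition 2.1]{baishyaS}. For (c), given a non-abelian subgroup $H$ of $G$ and $h\in H\setminus Z(H)$, we have $H\cap Z(G)\subseteq C_G(h)$, and $C_G(h)$ is abelian by (a); hence $H\cap Z(G)\subseteq Z(H)$, the reverse containment being trivial. Taking $H=G'$ in (c) together with the case list in (b) gives (d), since $Z(A_5)=1$ and $Z(SL(2,5))\cong C_2$.

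The remaining equivalences (h), (i), (j) are bookkeeping around the dichotomy $M\cong A_5$ versus $M\cong SL(2,5)$, recorded by $|G'\cap Z(G)|=|Z(M)|\in\{1,2\}$. If $|G'\cap Z(G)|=1$, then $G/Z(G)\cong A_5$ is perfect and Lemma \ref{nis} yields $G=Z(G)\times G'\cong Z(G)\times A_5$, whose centralizer count equals $|\Cent(A_5)|=22$; the converses in (h)--(j) are immediate from isoclinism invariance of $|\Cent(G)|$. The main obstacle is Step 1, the Schur-multiplier identification of $M$: one must know that every finite perfect stem extension of $A_5$ is a quotient of the universal central extension $SL(2,5)$, so that the only possibilities are $A_5$ itself and $SL(2,5)$. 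Once this reduction is secured, every remaining assertion is an application of an isoclinism invariant already established in Section 2, plus the small $|\Cent|$ and $\omega$ computation for the two target groups.
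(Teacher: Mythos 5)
Your overall strategy is sound and, for the key structural step, genuinely different from the paper's. The paper first proves (a) directly from the fact that centralizers of nontrivial elements of $A_5$ have order $3$, $4$ or $5$, then invokes the classification of finite non-solvable CA-groups (\cite[Theorem 3.8]{abc}) to get $G'\cong A_5$ or $SL(2,5)$, and only afterwards deduces (e). You instead pass to the stem group $M$ with $Z(M)\subseteq M'$, observe that perfection of $M/Z(M)\cong A_5$ together with $Z(M)\subseteq M'$ forces $M=M'$, and identify $M$ as a perfect central extension of $A_5$, hence a quotient of the universal central extension $SL(2,5)$ since $H_2(A_5)\cong C_2$. This is correct and arguably cleaner: it gets (e) first without appealing to the CA-group classification, and then (a), (b), (f), (g) all follow from isoclinism invariance plus inspection of the two concrete groups. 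The paper does use the Schur multiplier of $A_5$ anyway (in its proof of (d)), so your route does not import machinery the paper avoids; it just reorders the logic around the universal central extension. Your treatment of (h)--(j) via $\mid G'\cap Z(G)\mid=\mid Z(M)\mid\in\{1,2\}$ and Lemma \ref{nis} matches the paper's in substance.

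There is, however, a genuine error in your argument for (c). The inclusion $H\cap Z(G)\subseteq Z(H)$ is the trivial one: an element of $Z(G)$ lying in $H$ automatically centralizes $H$. The inclusion you dismiss as "trivial," namely $Z(H)\subseteq Z(G)$, is the substantive half and is false in general (e.g.\ $H=D_8\leq S_4$ has $Z(H)\neq 1=Z(S_4)$); it is exactly where the CA property must be used. The correct argument: take $h\in Z(H)$ and, since $H$ is non-abelian, non-commuting elements $x,y\in H$. If $h\notin Z(G)$ then $C_G(h)$ is abelian by (a), yet $x,y\in C_G(h)$ do not commute, a contradiction; hence $Z(H)\subseteq Z(G)$. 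With this one-line repair (which uses only tools you already have in hand), part (c) and hence part (d) go through, and the rest of your proof stands.
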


\begin{proof}

By Remark \ref{remark}, $G$ is isoclinic with a finite group  $M$ such that $Z(M) \subseteq M'$.
 
a) It follows from the fact that $ \mid C(xZ(G)) \mid \in \lbrace 3, 4, 5 \rbrace$ for any $xZ(G) \neq Z(G)$.

b) Combining part (a) and \cite[Proposition 2.1]{baishyaS}, we have $M$ is a finite non-solvable CA-group. Now, using \cite[Theorem 3.8]{abc}, we get $G' \cong M' \cong A_5$ or $SL(2, 5)$.

c) It follows from \cite[Proposition 3.1]{baishyaS}.

d) Since $G$ is isoclinic with $M$, therefore by part (c), we have $Z(G')=G' \cap Z(G) \cong M' \cap Z(M)$.  Now, the result follows using \cite[p. 284 and Proposition 2.1.7]{schur}.

e) It follows using part (b) with \cite[Proposition 2.25]{baishyarima}.

f) It follows using part (e), \cite[Lemma 3.2]{non} and GAP \cite{gap}.

g) It follows using parts (a), (f), \cite[Lemma 2.6]{ed09} and \cite[Lemma 2.1]{non}.

h) In view of part (e) and \cite[Lemma 3.2]{non}, we have $\mid \Cent(G)\mid=22$ iff $G$ is isoclinic with $\frac{G}{Z(G)}$. Now, the result follows from \cite[p. 135]{hall}.

i) If $\mid \Cent(G)\mid=22$, then in view of part (h), we have $G' \cong A_5$. Moreover, by \cite[Proposition 3.4]{baishyaS}, we have $G=G'Z(G)$. Therefore using part (h) again, we have $G= Z(G) \times A_5$. Converse is trivial.

j) Combining parts (d), (f) and (h), we get the result.
\end{proof}

\begin{cor}\label{aaa}
Let $H$ be a non-abelian subgroup of $G$ with $\frac{G}{Z(G)} \cong A_5$. Then $H$ is isoclinic with $Q_8, S_3, D_{10}, A_4, SL(2, 3), A_5$ or $SL(2, 5)$. In particular, 
 $\mid \Cent(H)\mid \in \lbrace 4, 5, 6, 7, 8, 22, 32 \rbrace$. 
\end{cor}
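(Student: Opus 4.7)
The plan is to combine the fact that $G$ is a CA-group (Proposition~\ref{ncFf1}(a)) with a case analysis on the isomorphism type of $\frac{H}{Z(H)}$. First I would observe that since the CA property is inherited by subgroups, $H$ is itself a CA-group. Then for any $z \in Z(H)$, the centralizer $C_G(z)$ contains the non-abelian subgroup $H$, so $C_G(z)$ cannot be abelian; the CA hypothesis on $G$ therefore forces $z \in Z(G)$. Hence $Z(H) = H \cap Z(G)$ and
\[
\frac{H}{Z(H)} \;=\; \frac{H}{H \cap Z(G)} \;\hookrightarrow\; \frac{G}{Z(G)} \;\cong\; A_5.
\]
Since $H$ is non-abelian, $\frac{H}{Z(H)}$ is non-cyclic; up to isomorphism the non-cyclic subgroups of $A_5$ are $C_2\times C_2$, $S_3$, $D_{10}$, $A_4$ and $A_5$.

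Next I would dispatch each of these five possibilities using results already in the paper. If $\frac{H}{Z(H)}\cong C_2\times C_2$, then picking $x,y\in H$ whose images generate $\frac{H}{Z(H)}$ and using $x^2,\,y^2,\,[x,y]\in Z(H)$ one obtains $[x,y]^2=[x^2,y]=1$ and $H'=\langle[x,y]\rangle$, so $\mid H' \mid=2$; Proposition~\ref{n3}(b) with $p=2$, $a=1$ then gives $H$ isoclinic with an extraspecial group of order $8$, i.e.\ with $Q_8$, and $\mid\Cent(H)\mid=4$. If $\frac{H}{Z(H)}\cong S_3$, Corollary~\ref{n3987} yields $H$ isoclinic with $S_3$ and $\mid\Cent(H)\mid=5$. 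If $\frac{H}{Z(H)}\cong D_{10}$, Corollary~\ref{coro} (with $n=5$ odd) gives $H$ isoclinic with $D_{10}$ and $\mid\Cent(H)\mid=7$. If $\frac{H}{Z(H)}\cong A_4$, Propositions~\ref{co133} and~\ref{co188} give $H$ isoclinic with $A_4$ or $SL(2,3)$ with $\mid\Cent(H)\mid\in\{6,8\}$. Finally, if $\frac{H}{Z(H)}\cong A_5$, applying Proposition~\ref{ncFf1} with $H$ in place of $G$ yields $H$ isoclinic with $A_5$ or $SL(2,5)$ and $\mid\Cent(H)\mid\in\{22,32\}$.

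The principal obstacle is the identification $Z(H)=H\cap Z(G)$: without this the induced map $\frac{H}{Z(H)}\to\frac{G}{Z(G)}$ need not be injective, and the reduction to subgroups of $A_5$ would collapse. The CA hypothesis is precisely what rules this out. The only other subtlety lies in the $C_2\times C_2$ case, where one must check that $H$ really has commutator subgroup of order $2$; as observed above, this is forced by nilpotency class $\le 2$ together with the fact that $\frac{H}{Z(H)}$ has exponent $2$. The claimed centralizer counts then follow from isoclinism invariance of $\mid\Cent(\cdot)\mid$ (Remark~\ref{remark} together with \cite[Lemma~3.2]{non}).
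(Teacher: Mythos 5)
Your proposal is correct and follows essentially the same route as the paper: identify $Z(H)=H\cap Z(G)$ via the CA property of $G$ (the paper cites Proposition~\ref{ncFf1}(c) and \cite[Proposition 3.1]{baishyaS} for this), embed $\frac{H}{Z(H)}$ into $A_5$, list its non-cyclic subgroups, and dispatch each case by the earlier classification results. The only differences are cosmetic choices of citation (e.g.\ Proposition~\ref{n3}(b) in place of \cite[Proposition 3.13]{baishyaS} for the $C_2\times C_2$ case, and Corollary~\ref{coro} rather than Corollary~\ref{n3987} for $D_{10}$), and your direct verification that $\lvert H'\rvert=2$ is a sound elementary substitute.
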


\begin{proof}
Using Proposition \ref{ncFf1} and \cite[Proposition 3.1]{baishyaS},  we have 
$
\frac{H}{Z(H)}=\frac{H}{H \cap Z(G)}\cong \frac{HZ(G)}{Z(G)} \leq A_5.
$
Therefore $\frac{H}{Z(H)} \cong C_2 \times C_2, S_3, D_{10}, A_4$ or $A_5$. Now, the result follows using \cite[Proposition 3.13] {baishyaS}, \cite[Corollary 2.5]{baishya},  Corollary \ref{n3987}, and  Propositions \ref{co133}, \ref{co188}, \ref{ncFf1}.
\end{proof}

\section{Groups with $\mid \frac{G}{Z(G)} \mid =p^3$, $p$ a prime}

 Let $p$ be a prime. A finite $p$-group $G$ is said to be a special $p$-group of rank $k$ if $G'=Z(G)$ is elementary abelian of order $p^k$ and $\frac{G}{G'}$ is elementary abelian. In particular, if $G$ is special and $\mid G' \mid=\mid Z(G) \mid=p$, then  $G$ is extraspecial. Recall that a $p$-group of order $p^n> p^3$ is of maximal class, if its nilpotency class is $n-1$.

 For any group $G$, it is known that $\frac{G}{Z(G)}\cong C_p \times C_p$ if and only if $G$ is isoclinic with an extraspecial group of order $p^3$ (see, for example \cite[Proposition 3.13]{baishyaS}). Here we  determine  all groups $G$ with $\mid \frac{G}{Z(G)} \mid = p^3$ upto isoclinism. For such groups, we have  $\mid \Cent(G)\mid=p^2+2$ or $p^2+p+2$ (\cite[Proposition 2.14]{baishya2}).

Note that given a capable group $K$ of order $p^3$, $K \cong C_p \times C_p\times C_p$ if $K$ is abelian. Otherwise, $K \cong D_8$ or a group of exponent $p$ (\cite[Remark 2.13]{baishya2}).  In the following result, which generalizes \cite[Theorem 1 (iv)]{machale}, $G_1=\langle a_1, a_2, a_3, b_{12}, b_{13}, b_{23}; [a_i, a_j]=b_{ij}, a_i^p=a_3^p=b_{ij}^p=1 (1 \leq i<j \leq 3) \rangle$.

\begin{prop}\label{prop34}
Let $G$ be any  group such that $\frac{G}{Z(G)} \cong C_p \times C_p \times C_p$, $p$ a prime. Then
\begin{enumerate}
	\item $\mid \Cent(G)\mid=p^2+2$ if and only if $G$ is isoclinic with a rank $2$ special group of order $p^5$.
	\item $\mid \Cent(G)\mid=p^2+p+2$ if and only if  $G$ is isoclinic with $G_1$.
\end{enumerate} 
\end{prop}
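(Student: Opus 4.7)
The plan is to use Remark \ref{remark} to replace $G$ by a finite group $M$ that is isoclinic with $G$ and satisfies $Z(M)\subseteq M'$. Since isoclinism preserves the central quotient and the number of element centralizers (the latter by \cite[Lemma 3.2]{non}), we have $M/Z(M)\cong C_p\times C_p\times C_p$ and it suffices to analyze $M$. From $M/Z(M)$ abelian we get $M'\subseteq Z(M)$, and combining this with $Z(M)\subseteq M'$ forces $M'=Z(M)$. Thus $M$ is a special $p$-group with $M/M'\cong C_p\times C_p\times C_p$.

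Next, I would study the commutator map as a non-degenerate alternating $\mathbb{F}_p$-bilinear form
\[
\omega:M/Z(M)\times M/Z(M)\longrightarrow M',\qquad (xZ(M),yZ(M))\mapsto [x,y],
\]
whose image generates $M'$. Any $\mathbb{F}_p$-valued alternating form on $\mathbb{F}_p^{3}$ has even rank at most $2$, so non-degeneracy rules out $|M'|=p$; hence $|M'|\in\{p^{2},p^{3}\}$. Choosing lifts $a_1,a_2,a_3$ of an $\mathbb{F}_p$-basis of $M/Z(M)$, the three commutators $c_{ij}:=[a_i,a_j]$ ($i<j$) span $M'$. When $|M'|=p^{2}$ they satisfy a single linear relation, and a change of the $a_i$ (diagonalising $\omega$) brings the group to the canonical form with $[a_2,a_3]=1$ and $c_{12},c_{13}$ a basis of $M'$; this identifies $M$ up to isomorphism as a rank $2$ special group of order $p^{5}$. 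When $|M'|=p^{3}$ the three $c_{ij}$ are linearly independent and the presentation matches that of $G_1$ verbatim, so $M\cong G_1$; in both cases $G$ is isoclinic to the claimed group.

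To close both ``iff''s I would compute $|\Cent(M)|$ from the form. For $x\notin Z(M)$ one has $C_M(x)/Z(M)=\ker\omega(xZ(M),-)$, so centralizers are in bijection with the distinct kernels. In the rank-$2$ case a direct solution of the linear system shows $\ker\omega(xZ(M),-)=\mathbb{F}_p\,xZ(M)$ whenever the first coordinate of $xZ(M)$ is nonzero (yielding $p^{2}$ centralizers of order $p^{3}$), while it equals the $2$-dimensional subspace cut out by vanishing of the first coordinate whenever that coordinate is zero (a single additional centralizer of order $p^{4}$); including $Z(M)$ itself gives $|\Cent(M)|=p^{2}+2$. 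In the rank-$3$ case the alternating $3\times3$ matrix representing $\omega(xZ(M),-)$ has kernel exactly $\mathbb{F}_p\,xZ(M)$ for every nonzero $xZ(M)$, so the non-central centralizers correspond to the $\frac{p^{3}-1}{p-1}=p^{2}+p+1$ lines in $\mathbb{F}_p^{3}$, giving $|\Cent(M)|=p^{2}+p+2$.

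The main obstacle is the uniqueness-up-to-isomorphism assertions in the two cases, i.e.\ showing that the claimed groups are really single isomorphism (hence isoclinism) classes. This reduces to classifying non-degenerate alternating forms $\mathbb{F}_p^{3}\times\mathbb{F}_p^{3}\to\mathbb{F}_p^{k}$ under the joint action of $\GL_3(\mathbb{F}_p)\times\GL_k(\mathbb{F}_p)$: for $k=2$ one interprets such a form as a $2$-dimensional subspace of $\Lambda^{2}(\mathbb{F}_p^{3})^{*}\cong\mathbb{F}_p^{3}$ and uses transitivity of $\GL_3$ on $2$-dimensional subspaces, while for $k=3$ the form is simply the universal one $\Lambda^{2}$. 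A minor secondary point is the case $p=2$: the rank bound on alternating $3\times3$ matrices persists because such matrices still have zero determinant in characteristic $2$, and the presentation of $G_1$ continues to define the same isoclinism class even though its exponent is no longer $p$.
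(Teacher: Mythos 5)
Your proposal is correct, but it proceeds by a genuinely different route from the paper. The paper's proof is essentially a reduction to the literature: after passing (as you do) to an isoclinic finite group $M$ with $Z(M)=M'$, it invokes \cite[Proposition 2.14]{baishya2} to get the dichotomy ``abelian centralizer of index $p$'' versus ``conjugate type $(1,p^2)$'', and then quotes Ishikawa's classification \cite[Theorem 4.1]{ish1} of $p$-groups of conjugate type $(1,p^2)$ (together with \cite[Theorem 2.3]{baishya} and \cite[Lemma 4.6]{isaacs1}) to identify $M$ as a rank $2$ special group of order $p^5$ or as $G_1$. You instead work directly with the commutator form $\omega\colon M/Z(M)\times M/Z(M)\to M'$ as a non-degenerate alternating $\mathbb{F}_p$-bilinear map on $\mathbb{F}_p^3$: the even-rank property excludes $|M'|=p$, the two remaining cases $|M'|=p^2,p^3$ correspond to a line, respectively the zero subspace, in $\Lambda^2(\mathbb{F}_p^3)$, transitivity of $\GL_3(\mathbb{F}_p)$ on such lines gives uniqueness up to isoclinism, and an explicit kernel computation yields the counts $p^2+2$ and $p^2+p+2$. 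This buys a self-contained argument that avoids Ishikawa's theorem and the Camina-group side condition the paper has to dispose of, and it simultaneously reproves Corollary \ref{coro222}; the cost is that you must supply the (easy but necessary) classification of the forms yourself, which the paper gets for free from its citations. Two cosmetic points: in the rank-$2$ count the extra centralizer beyond the $p^2+1$ proper ones is $C(1)=M$, not ``$Z(M)$ itself'' (note $Z(M)$ is never an element centralizer here); and the canonical form identifies $M$ only up to isoclinism, not isomorphism --- which is all the statement requires. Neither affects the validity of the argument.
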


\begin{proof}
 
By  Remark \ref{remark}, $G$ is isoclinic with a finite group $M$ such that $Z(M)= M'$. Now,

a) Suppose $\mid \Cent(G)\mid=p^2+2$. Then using \cite{non}, we have $\mid \Cent(M)\mid=p^2+2$. By \cite[Proposition 2.14]{baishya2} $M$ has an abelian normal centralizer of index $p$. Consequently, using \cite[Theorem 2.3]{baishya} and \cite[Lemma 4.6]{isaacs1}, we have $M'\cong C_p \times C_p$ and  $M$ is a rank $2$ special group of order $p^5$. Conversely, if  $G$ is isoclinic with a rank $2$ special group of order $p^5$, then using  \cite[Theorem 4.1]{ish1}, \cite[Proposition 2.14]{baishya2} and \cite[Lemma 3.2]{non}, we have $\mid \Cent(G)\mid=p^2+2$.\\

b) Suppose $\mid \Cent(G)\mid=p^2+p+2$. Then using \cite{non}, we have $\mid \Cent(M)\mid=p^2+p+2$. In view of \cite[Proposition 2.14]{baishya2}, $M$ is of conjugate type $(1, p^2)$. Therefore using \cite[Theorem 4.1]{ish1}, we have $M'\cong C_p \times C_p \times C_p$ and $M$ is isoclinic with $G_1$, by noting that in the present scenario, $M$ cannot be a Camina group. Conversely, suppose $G$ is isoclinic with $G_1$, which is of conjugate type $(1, p^2)$ (\cite[Theorem 4.1]{ish1}). Then the result follows using \cite[Proposition 2.14]{baishya2} and \cite[Lemma 3.2]{non}.
\end{proof}

As a corollary we have the following generalization of \cite[Theorem 1 (iv)]{machale}, by noting that in the present scenario, we have $G' \cong M'$.

\begin{cor}\label{coro222}
Let $G$ be any  group such that $\frac{G}{Z(G)} \cong C_p \times C_p \times C_p$, $p$ a prime. Then  $G'\cong C_p \times C_p$ or $C_p \times C_p \times C_p$.
\end{cor}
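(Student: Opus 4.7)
The plan is to derive this corollary directly from Proposition \ref{prop34} together with the fact that isoclinism preserves the isomorphism class of the commutator subgroup. First, I would invoke \cite[Proposition 2.14]{baishya2} (already used inside the proof of Proposition \ref{prop34}) to conclude that any group $G$ with $|G/Z(G)| = p^3$ must have $|\Cent(G)| \in \{p^2+2,\ p^2+p+2\}$, so that exactly one of the two cases of Proposition \ref{prop34} applies to $G$.

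Next, by the very definition of isoclinism given at the start of Section 2, two isoclinic groups have isomorphic derived subgroups via the map $\phi$. Hence, if $G$ is isoclinic with a finite representative $M$, then $G' \cong M'$ — this is the content of the parenthetical hint at the end of the statement. In case (a), Proposition \ref{prop34} tells us that $G$ is isoclinic with a rank $2$ special $p$-group $M$ of order $p^5$, so by definition $M' = Z(M)$ is elementary abelian of order $p^2$, giving $G' \cong M' \cong C_p \times C_p$.

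In case (b), Proposition \ref{prop34} identifies $G$ as isoclinic with the explicit group
\[
G_1=\langle a_1, a_2, a_3, b_{12}, b_{13}, b_{23}\ ;\ [a_i, a_j]=b_{ij},\ a_i^p=b_{ij}^p=1,\ 1 \le i<j \le 3\rangle.
\]
Reading off the relations, the commutator subgroup is generated by the central elements $b_{12}, b_{13}, b_{23}$, each of order $p$, so $G_1' \cong C_p \times C_p \times C_p$, and therefore $G' \cong G_1' \cong C_p \times C_p \times C_p$.

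There is no real obstacle: once Proposition \ref{prop34} is in hand, the corollary reduces to reading off $|M'|$ from the structural description of the two possible isoclinism representatives and invoking the isomorphism $G' \cong M'$ built into the definition of isoclinism. The only thing worth pointing out in the write-up is precisely this last observation, which the authors flag just before the statement of the corollary.
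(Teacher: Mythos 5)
Your proposal is correct and follows essentially the same route as the paper: the authors also deduce the corollary from Proposition \ref{prop34} via the remark that $G' \cong M'$ under isoclinism, with $M' \cong C_p \times C_p$ in the special-group case (since $M'=Z(M)$ is elementary abelian of rank $2$) and $M' \cong G_1' \cong C_p \times C_p \times C_p$ in the other case. Your explicit appeal to \cite[Proposition 2.14]{baishya2} to see that exactly one of the two cases of Proposition \ref{prop34} must occur is exactly the dichotomy the paper records just before that proposition, so nothing is missing.
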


We now consider the groups $G$ for which $\frac{G}{Z(G)}$ is non-abelian of order $p^3$. The case for $p=2$ have been settled in Proposition \ref{prop9765}. In the following result, $G_2=\langle a_1, a_2, b, c_1, c_2; [a_1, a_2]=b, [a_i, b]=c_i, a_i^p=b^p=c_i^p=1  (i=1, 2) \rangle$, $p > 2$.

\begin{prop}\label{prop987}
Let $G$ be any  group and $p$ an odd prime.  Suppose that $\frac{G}{Z(G)}$ is non-abelian of order $p^3$. Then
\begin{enumerate}
	\item $\mid \Cent(G)\mid=p^2+2$ if and only if $G$ is isoclinic with a group of order $p^4$ and maximal class.
	\item $\mid \Cent(G)\mid=p^2+p+2$ if and only if  $G$ is isoclinic with $G_2$.
\end{enumerate} 
\end{prop}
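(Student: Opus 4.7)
The plan is to mirror the template of Proposition~\ref{prop34}. By Remark~\ref{remark}, I would pick a finite stem group $M$ isoclinic to $G$ with $Z(M) \le M'$, so that $|\Cent(G)| = |\Cent(M)|$ by \cite[Lemma 3.2]{non} and $M/Z(M) \cong G/Z(G)$ is non-abelian of order $p^3$. Since the Schur multiplier of a finite $p$-group is a $p$-group, a standard stem-cover argument ensures that $M$ itself is a finite $p$-group, so one can work inside the category of $p$-groups throughout.

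For the forward direction of (a), assume $|\Cent(M)| = p^2+2$. By \cite[Proposition 2.14]{baishya2}, $M$ carries an abelian normal centralizer $C \trianglelefteq M$ of index $p$, and $C/Z(M)$ is an abelian subgroup of $M/Z(M)$ of index $p$, hence of order $p^2$. Picking a generator $\sigma$ of $M/C$ and using that $C$ is abelian, one obtains $M' = [\sigma, C]$ and $Z(M) = C_C(\sigma)$, which immediately gives $|M'| = |C/Z(M)| = p^2$. Since $M/Z(M)$ is non-abelian, $M$ is not of class $2$, so $Z(M) \subsetneq M'$; combined with $|M'| = p^2$ this forces $|Z(M)| = p$ and $|M| = p^4$, making $M$ a $p$-group of maximal class of order $p^4$. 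For the converse, any such $M$ automatically has $|M/Z(M)| = p^3$ non-abelian and (for odd $p$) contains an abelian subgroup of index $p$, so \cite[Proposition 2.14]{baishya2} returns $|\Cent(M)| = p^2 + 2$.

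For (b), assume $|\Cent(M)| = p^2+p+2$. Part (a) already rules out an abelian centralizer of index $p$ in $M$, so by \cite[Proposition 2.14]{baishya2}, $M$ is of conjugate type $(1,p^2)$. I would then invoke Ishikawa's classification \cite[Theorem 4.1]{ish1} and, as in the proof of Proposition~\ref{prop34}(b), exclude the Camina case by a direct structural check; the remaining candidates are the isoclinism classes of $G_1$ (with $M/Z(M)$ elementary abelian) and $G_2$ (with $M/Z(M)$ non-abelian of order $p^3$), so the hypothesis selects exactly the $G_2$-class. The converse is immediate: $G_2$ itself is of conjugate type $(1,p^2)$ by \cite[Theorem 4.1]{ish1}, whence \cite[Proposition 2.14]{baishya2} together with \cite[Lemma 3.2]{non} yields $|\Cent(G)| = |\Cent(G_2)| = p^2+p+2$. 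The main obstacle is this forward direction of (b): one must navigate Ishikawa's list carefully enough to confirm that the condition ``$M/Z(M)$ non-abelian of order $p^3$'' singles out the $G_2$-class uniquely, which in turn hinges on reusing the Camina-exclusion argument from Proposition~\ref{prop34}(b) in the current setting.
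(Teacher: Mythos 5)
Your proposal follows the paper's proof essentially step for step: pass to a stem group $M$ via Remark~\ref{remark}, use \cite[Proposition 2.14]{baishya2} to split the two cases (abelian normal centralizer of index $p$ versus conjugate type $(1,p^2)$), and invoke Ishikawa's classification to land on $G_2$ in part (b) — the paper cites \cite[Theorem 4.2]{ish1} rather than Theorem 4.1 and quotes \cite[Theorem 2.3]{baishya} in place of your direct computation of $|M'|=|C/Z(M)|=p^2$, but these are cosmetic differences. The one assertion you leave unproved, that a group of order $p^4$ and maximal class has an abelian subgroup of index $p$, is indeed true (take $C_M(Z_2(M))$, which has index $p$ and center of order at least $p^2$), so the argument is sound.
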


\begin{proof}

By  Remark \ref{remark}, $G$ is isoclinic with a finite group $M$ such that $Z(M) \subseteq M'$.  Now,

a) Suppose $\mid \Cent(G)\mid=p^2+2$. Then using \cite{non}, we have $\mid \Cent(M)\mid=p^2+2$. By \cite[Proposition 2.14]{baishya2} $M$ has an abelian normal centralizer, say, $C(x)$ of index $p$ for some $ x \in M$. Consequently, using \cite[Theorem 2.3]{baishya} we have $\mid M'\mid =p^2$. Hence the result follows. Conversely, suppose $G$ is isoclinic with a group $H$ of order $p^4$ and maximal class. By \cite[Theorem 4.2]{ish1}, $H$ must have a centralizer of index $p$. Now, using \cite[Proposition 2.14]{baishya2} and \cite[Lemma 3.2]{non}, we have $\mid \Cent(G)\mid=p^2+2$.\\

b) Suppose $\mid \Cent(G)\mid=p^2+p+2$.  Then using \cite{non}, we have $\mid \Cent(M)\mid=p^2+p+2$. In view of \cite[Proposition 2.14]{baishya2}, $M$ is of conjugate type $(1, p^2)$. Therefore using \cite[Theorem 4.2]{ish1}, we have $\mid M' \mid=p^3$ and $M$ is isoclinic with $G_2$. Conversely, suppose $G$ is isoclinic with $G_2$, which is of conjugate type $(1, p^2)$ (\cite[Theorem 4.2]{ish1}). Then the result follows using \cite[Proposition 2.14]{baishya2} and \cite[Lemma 3.2]{non}.
\end{proof}

In view of \cite[Propositions 2.1, 3.1]{baishyaS} and \cite[Proposition 2.14]{baishya2}, we now have:

\begin{cor}\label{kkk}
Let $H$ be a non-abelian subgroup of  $G$ with $\mid \frac{G}{Z(G)} \mid =p^3$, $p$ a prime. Then $H$ is isoclinic with any one group in Propositions \ref{prop34},  \ref {prop987}, \ref{prop9765} or an extraspecial group of order $p^3$. In particular, $\mid \Cent(H)\mid \in \lbrace p+2, p^2+2, p^2+p+2 \rbrace$ and $\omega(H) \in \lbrace p+1, p^2+1, p^2+p+1 \rbrace$.
\end{cor}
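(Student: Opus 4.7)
The plan is to analyze the possible isomorphism types of $H/Z(H)$ given the constraint $\mid G/Z(G)\mid = p^3$, and then invoke each of the previous propositions in the appropriate subcase. Since $H \cap Z(G) \leq Z(H)$ automatically, the quotient $H/Z(H)$ is a quotient of $H/(H \cap Z(G)) \cong HZ(G)/Z(G)$, which is a subgroup of $G/Z(G)$. Hence $\mid H/Z(H)\mid$ is a power of $p$ dividing $p^3$. As $H$ is non-abelian, $H/Z(H)$ is non-cyclic, so $\mid H/Z(H)\mid \in \{p^2, p^3\}$.

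If $\mid H/Z(H)\mid = p^2$, then $H/Z(H) \cong C_p \times C_p$, and I would invoke \cite[Proposition 3.13]{baishyaS} (already cited in the excerpt) to conclude that $H$ is isoclinic with an extraspecial group of order $p^3$, so $\mid \Cent(H)\mid = p+2$ and $\omega(H) = p+1$. If $\mid H/Z(H)\mid = p^3$, then the chain of divisibilities $\mid H/Z(H)\mid \leq \mid HZ(G)/Z(G)\mid \leq \mid G/Z(G)\mid = p^3$ collapses, forcing $Z(H) = H \cap Z(G)$ and $HZ(G) = G$; thus $H/Z(H) \cong G/Z(G)$. According as this central factor is elementary abelian, non-abelian with $p=2$ (necessarily $D_8$, since $Q_8$ is not capable), or non-abelian with $p$ odd, I apply Proposition \ref{prop34}, Proposition \ref{prop9765}, or Proposition \ref{prop987} respectively to obtain the isoclinism class of $H$.

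For the counting assertion, in each subcase of Case 2 the cited proposition yields $\mid \Cent(H)\mid \in \{p^2+2, p^2+p+2\}$ directly (Proposition \ref{prop9765} contributes $\mid \Cent(H)\mid = 6 = p^2 + 2$ when $p=2$, consistent with Corollary \ref{xxx}); combined with Case 1 this gives the three-element set $\{p+2, p^2+2, p^2+p+2\}$. The statement on $\omega(H)$ follows from invariance of $\omega$ under isoclinism via \cite[Proposition 2.1]{baishyaS} applied to each of the model groups, using also \cite[Proposition 2.14]{baishya2}.

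The only non-routine point is the identification $H/Z(H) \cong G/Z(G)$ when $\mid H/Z(H)\mid = p^3$; once that equality is extracted from the index collapse above, the remaining work is case-by-case citation and no new technique is required.
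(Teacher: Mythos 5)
Your proof is correct and follows essentially the same route as the paper: identify $H/Z(H)$ with a subgroup of $G/Z(G)$, split into the cases $\mid H/Z(H)\mid=p^2$ (extraspecial, via \cite[Proposition 3.13]{baishyaS}) and $\mid H/Z(H)\mid=p^3$ (apply Propositions \ref{prop34}, \ref{prop9765}, \ref{prop987} according to the isomorphism type of the capable central factor), and then read off $\mid \Cent(H)\mid$ and $\omega(H)$. The only difference is cosmetic: where the paper cites \cite[Proposition 3.1]{baishyaS} (legitimate because $G$ is a CA-group) to get $Z(H)=H\cap Z(G)$ outright, you use only the trivial containment $H\cap Z(G)\le Z(H)$ together with the divisibility collapse $\mid H/Z(H)\mid \,\big|\, \mid HZ(G)/Z(G)\mid \,\big|\, p^3$, which is a slightly more self-contained way to reach the same dichotomy.
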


\section{$\C_{n}$-groups for $n \leq 9$.}

It can be easily checked that there is no $\C_{n}$-group for $n = 2, 3$.

\begin{thm}\label{th34}
Let $G$ be any  $\C_{n}$-group. Then
\begin{enumerate}
	\item $n=1$ if and only if $G$ is isoclinic with $C_1$.
	\item $n=4$ if and only if  $G$ is isoclinic with $Q_8$.
	\item $n=5$ if and only if $G$ is isoclinic with $S_3$ or a non-abelian group of order $27$.
	\item $n=6$ if and only if $G$ is isoclinic with  $A_4, D_{16}$, an ultraspecial group of order $64$ or a rank $2$ special group of order $32$.
	\item $n=7$ if and only if $G$ is isoclinic with $D_{10}, (C_4, C_5)$ or a non-abelian group of order $125$.
	\item $n=8$ if and only if $G$ is isoclinic with $G_1, SL(2, 3)$ or $D_{16}$.
	\item $n=9$ if and only if $G$ is isoclinic with $C_7 \rtimes C_2, C_7 \rtimes C_3, (C_6, C_7)$ or a non-abelian group of order $343$.
\end{enumerate} 
\end{thm}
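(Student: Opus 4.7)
The plan is to apply Remark~\ref{remark} to pass, within each isoclinism class, to a finite stem group $M$ with $Z(M) \subseteq M'$ and $|\Cent(M)| = n$. For each admissible value of $n \leq 9$ the isomorphism type of $M/Z(M)$ is strongly constrained, and once $M/Z(M)$ is pinned down a previously proven classification identifies $M$ up to isoclinism. The case $n=1$ is simply the abelian case, and the absence of any $\C_n$-group for $n\in\{2,3\}$ is noted just above the theorem.

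For $n\in\{4,5,7,9\}$, the analysis reduces to alternatives already treated: $M/Z(M)$ must be one of a dihedral group $D_{2q}$ with $q$ an odd prime (Corollary~\ref{coro}), a Frobenius group $C_q\rtimes C_r$ with abelian kernel and complement (Proposition~\ref{CG17}), a group of order $pq$ with primes $p<q$ (Corollary~\ref{n3987}), or elementary abelian of order $p^2$ with $|M'|=p$ (Proposition~\ref{n3}(b)). Matching the centralizer count---$|\Cent(M)|=\frac{p^{2a}-1}{p-1}+1$ from Proposition~\ref{n3}(b), $|\Cent(M)|=|G'|+2$ from Proposition~\ref{CG17}, or $|\Cent(M)|=q+2$ from Corollary~\ref{n3987}---against the prescribed value of $n$ fixes the relevant prime and exponent and produces exactly the groups listed in parts (b), (c), (e) and (g). For example, $n=5$ forces $M/Z(M)\cong S_3$ or $C_3\times C_3$; $n=9$ forces $M/Z(M)$ to be one of $D_{14}$, $C_7\rtimes C_3$, $C_7\rtimes C_6$ or $C_7\times C_7$.

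The cases $n=6$ and $n=8$ are more intricate because several non-abelian quotients $M/Z(M)$ can coexist. One first restricts the possible orders of $M/Z(M)$ using the known bounds relating $|\Cent(M)|$ and $|M/Z(M)|$, and then invokes the matching classification: the $A_4$ central-factor case is handled by Corollary~\ref{ttt} and Proposition~\ref{co188}; the $D_8$ and $D_{12}$ central-factor cases by Propositions~\ref{prop9765} and~\ref{bcc}; and the elementary abelian case $C_p\times C_p\times C_p$ by Proposition~\ref{prop34}, which in addition distinguishes between isoclinism with a rank $2$ special group of order $p^5$ and isoclinism with $G_1$ according to whether $|\Cent(M)|$ equals $p^2+2$ or $p^2+p+2$.

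The main obstacle is the ultraspecial case in part (d), where $M/Z(M)$ is elementary abelian of order $16$ with $|M'|=4$. This regime falls outside the hypotheses of all the earlier propositions, so one must (i) exclude every other candidate value of $|M/Z(M)|$ compatible with $|\Cent(M)|=6$, and (ii) show via a conjugate-type analysis in the spirit of \cite[Theorem 4.1]{ish1}---counting the number of abelian maximal subgroups containing $Z(M)$ and exploiting $Z(M)=M'$---that $M$ has conjugate type $(1,4)$ and rank $2$, thereby forcing it into the unique isoclinism class of ultraspecial $2$-groups of order $64$. The converse implications in each part follow routinely by computing $|\Cent|$ for the representative groups listed, as has already been done piece by piece in the cited propositions.
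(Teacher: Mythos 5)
Your proposal is correct and follows essentially the same route as the paper: reduce via Remark~\ref{remark} to a finite stem group, pin down the possible central factors for each $n\leq 9$ from the earlier centralizer-counting classifications, and then convert each central factor into an isoclinism class using Propositions~\ref{n3}, \ref{prop9765}, \ref{bcc}, \ref{prop34} and Corollaries~\ref{n3987}, \ref{ccc}. The only divergence is the $C_2^4$ central-factor case in part (d), where the paper simply cites an existing classification (\cite[Propositions 3.10, 3.15]{baishyaS}) rather than re-running the Ishikawa-type conjugate-type argument you sketch, and where your claim of a \emph{unique} isoclinism class of ultraspecial groups of order $64$ is stronger than what the theorem asserts or needs.
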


\begin{proof}
It follows using \cite[Theorem 3.5]{non}, \cite[Propositions 3.13,  3.15]{baishyaS}, \cite[Theorem 2.6]{baishya1}, Corollaries \ref{n3987}, \ref{ccc}, Propositions \ref{n3}, \ref{prop9765},   \ref{bcc} and \cite[Proposition 3.10, 3.13]{baishyaS} by noting that  $G$ is abelian if and only if it is isoclinic with $C_1$.
\end{proof}



\end{document}